\documentclass[11pt]{amsart}

\usepackage{enumerate,url,amssymb,  mathrsfs}
\usepackage{graphicx}
\newtheorem{theorem}{Theorem}[section]

\newtheorem*{lemma*}{Lemma}
\newtheorem{proposition}[theorem]{Proposition}
\newtheorem{corollary}[theorem]{Corollary}
\newtheorem{conjecture}[theorem]{Conjecture}

\theoremstyle{definition}
\newtheorem{definition}[theorem]{Definition}
\newtheorem{example}[theorem]{Example}
\newtheorem{question}[theorem]{Question}

\theoremstyle{remark}
\newtheorem{remark}[theorem]{Remark}

\numberwithin{equation}{section}


\newcommand{\bydef}{\stackrel {\textnormal{def}}{=\!\!=} }

\def\XXint#1#2#3{{\setbox0=\hbox{$#1{#2#3}{\int}$}
\vcenter{\hbox{$#2#3$}}\kern-.5\wd0}}

\setcounter{tocdepth}{1}

\begin{document}
\baselineskip6mm
\vskip0.4cm
\title[A hunt for sharp $\,\mathscr L ^p\,$-estimates]{A hunt for sharp $\,\mathscr L ^p\,$-estimates and \\Rank-One Convex Variational Integrals}

\author{Kari\; Astala, \; Tadeusz\; Iwaniec,  \\ Istv\'an \;Prause, \; Eero \;Saksman}

\address{Department of Mathematics and Statistics,
University of Helsinki, Finland}
\email{kari.astala@helsinki.fi}
\thanks{Astala and Saksman were supported by the Academy of Finland and Center of Excellence in Analysis and Dynamics research. Iwaniec was supported by the NSF grant DMS-0800416 and Academy of
Finland grant 1128331. Prause  was supported by project 1266182 of the Academy of Finland}   

\address{Department of Mathematics, Syracuse University, Syracuse,
NY 13244, USA and Department of Mathematics and Statistics,
University of Helsinki, Finland}
\email{tiwaniec@syr.edu}

\address{Department of Mathematics and Statistics,
University of Helsinki, Finland}
\email{istvan.prause@helsinki.fi}

\address{Department of Mathematics and Statistics,
University of Helsinki, Finland}
\email{eero.saksman@helsinki.fi}

\subjclass[2000]{Primary 30C60, 31A05; Secondary 35J70, 30C20}

\keywords{Critical Sobolev Exponents, Rank-one Convex and Quasiconvex Variational Integrals and Jacobian Inequalities}
\maketitle
\begin{center}{\large{\textit{In celebration of Matti Vuorinen's 65-th birthday}}}\end{center}
\begin{abstract} Learning how to figure out sharp $\,\mathscr L^p\,$-estimates of nonlinear differential expressions, to prove and use them, is a fundamental part of the development of PDEs and Geometric Function Theory (GFT). Our survey presents, among what is known to date, some notable recent efforts and novelties made in this direction.  We focus attention here on the historic Morrey's Conjecture and Burkholder's martingale inequalities for stochastic integrals. Some of these topics have already been discussed by the present authors \cite{AIPS} and by Rodrigo Ba\~{n}uelos  \cite{Ba1}. Nevertheless, there is always something new to add.\\

\end{abstract}

\section{Introduction}
The $\,\mathscr L^p\,$-theory of  PDEs has advanced considerably in the last two or three decades due to improved techniques in modern harmonic analysis \cite{As2, BSV, GR, IM, IMpr, NTV} , stochastic processes \cite{Ba1, BH, Bu2, Bu1, DV, GMS, NTV} , quasiconvex calculus of variations \cite{Iw1, IL, PS} , complex interpolation \cite{AIPS} , etc.

It begins with the fundamental work by B. Bojarski \cite{Bo2,Bo3,Bo1} who inaugurated the $\,\mathscr L^p$-theory of the first order elliptic PDEs in the plane. 
He applied the  Calder\'{o}n-Zygmund type singular integral
\begin{equation}
 (\mathbf S \omega)(z) \;= \;- \frac{1}{\pi}  \iint_\mathbb C \frac{\omega(\xi)\, \textnormal d \xi}{ (z - \xi)^2 }\;,\;\;\; \omega \in \mathscr L^p(\mathbb C)
\end{equation}
which we refer  to as the \textit{Beurling Transform}, after its earliest appearance in A. Beurling's old lecture notes \cite{Beb1,Beb2}. Its significance to PDEs and Geometric Function Theory lies in the identity $ \mathbf S \circ \frac{\partial}{\partial \bar{z}} = \frac{\partial}{\partial z}$.
Higher dimensional $(n \geqslant 3)$ analogues of the Beurling Transform have been found in various contexts \cite{Iw1, IMpr,IMacta, IMb,BL} and the need to evaluate their $\mathscr L^p$-norms became evere more quintessential in the analytical foundation of multidimensional Geometric Function Theory.

Our primary aim is to further the interest in the $\,
\mathscr L^p\,$- norm of the Beurling Transform
\begin{equation}
S_p \bydef \| \, \mathbf S : \mathscr L^p (\mathbb C) \rightarrow \mathscr L^p (\mathbb C) \| \;,\;\;\; 1 < p < \infty
\end{equation}
The as yet unsolved conjecture \cite{Iw4} asserts that
\begin{conjecture}\label{p-normConjecture}
For all $\,1<p<\infty\,$ it holds
\begin{equation}
  S_p = p^* - 1 \bydef \;\left\{\begin{array}{ll} p-1\;,\quad\qquad\; \textnormal{if}\;\;\; 2\leqslant p < \infty\\
       \;\;1/(p-1) \;,\;\;\;\;\textnormal{if}\;\;\; 1<p\leqslant 2
       \end{array}\right.
\end{equation}
\end{conjecture}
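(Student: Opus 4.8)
The plan is to treat the two inequalities $S_p\ge p^*-1$ and $S_p\le p^*-1$ separately; only the first is elementary. \textbf{Lower bound.} Since $\mathbf S$ is its own transpose for the bilinear pairing $\iint fg$, one has $S_p=S_{p'}$ with $\tfrac1p+\tfrac1{p'}=1$, so it is enough to treat $p\ge 2$. On $\mathscr L^2(\C)$ the operator $\mathbf S$ is, via the Fourier transform, multiplication by the unimodular symbol $\bar\xi/\xi$, whence $S_2=1$; for $p>2$ one tests $\mathbf S$ against near-extremisers of power type concentrated near a point — functions homogeneous of the critical degree $-2/p$, built from powers of $z$ and $\bar z$ and cut off to an annulus $\{\varepsilon<\abs z<1\}$ — for which $\mathbf S\omega$ is available in closed form and the ratio $\norm{\mathbf S\omega}_p/\norm{\omega}_p$ can be pushed arbitrarily close to $p-1$ as $\varepsilon\to 0$. (The same bound also follows, less elementarily, from Astala's area-distortion theorem together with the $\mathscr L^p$-invertibility of Beltrami operators $I-\mu\mathbf S$.) I expect no difficulty here.

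\textbf{Reduction of the upper bound.} The identity $\mathbf S\circ\partial_{\bar z}=\partial_z$ gives, for every $\phi\in C_c^\infty(\C,\C)$,
\[
\iint_{\C}\abs{\partial_z\phi}^p=\iint_{\C}\abs{\mathbf S(\partial_{\bar z}\phi)}^p\le S_p^{\,p}\iint_{\C}\abs{\partial_{\bar z}\phi}^p,
\]
and, since $\partial_{\bar z}C_c^\infty$ is dense in $\mathscr L^p(\C)$, $S_p$ equals the best constant in this inequality over $\phi\in C_c^\infty$. Hence $S_p\le p^*-1$ would follow from a pointwise estimate at the level of the differential: it suffices to exhibit a continuous function $\mathcal B_p$ on $2\times2$ real matrices $A$ — read off $D\phi$ through the pair $(\partial_{\bar z}\phi,\partial_z\phi)$, with $\abs{A_z}\pm\abs{A_{\bar z}}$ the singular values of $A$ — such that \emph{(i)} $\mathcal B_p(A)\ge\abs{A_z}^p-(p^*-1)^p\abs{A_{\bar z}}^p$ for all $A$; \emph{(ii)} $\mathcal B_p(0)=0$; and \emph{(iii)} $\mathcal B_p$ is quasiconcave, i.e.\ $-\mathcal B_p$ is quasiconvex. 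Indeed, (iii) applied at the null matrix and tested against $\psi=\phi$ gives $\iint_{\C}\mathcal B_p(D\phi)\le 0$ by (ii), and then (i) yields $\iint\abs{\partial_z\phi}^p\le(p^*-1)^p\iint\abs{\partial_{\bar z}\phi}^p$, i.e.\ $S_p\le p^*-1$. The natural candidate is Burkholder's function, in the matrix form
\[
\mathcal B_p(A)=p\Bigl(1-\tfrac1{p^*}\Bigr)^{p-1}\bigl(\abs{A_z}-(p^*-1)\abs{A_{\bar z}}\bigr)\bigl(\abs{A_z}+\abs{A_{\bar z}}\bigr)^{p-1},
\]
which is built to satisfy (i) and (ii) and which Burkholder — and, in this geometric guise, Iwaniec and Ba\~{n}uelos--Wang — proved to be \emph{rank-one concave} (``zig-zag concave''). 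The whole conjecture is thereby reduced to one point: \emph{upgrade the rank-one concavity of $\mathcal B_p$ to quasiconcavity.}

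\textbf{The obstacle, and partial routes.} This last step is the crux, and it is open. In dimension $n\ge 3$ rank-one convexity does not imply quasiconvexity (\v{S}ver\'{a}k), so no soft argument can work; in the plane, for precisely this one-parameter family, the question is of Morrey-conjecture type — the link advertised in the title of this survey. I see three partial strategies, none decisive so far. \emph{(a) Martingale and heat-flow methods.} The Burkholder function already proves the sharp constant $p^*-1$ for the underlying \emph{probabilistic} transform; the issue is whether the loss in passing from stochastic integrals to $\mathbf S$ — a factor $2$ in the Gundy--Varopoulos representation — is an artefact. Refinements of this scheme give $S_p\le C(p^*-1)$ with a universal constant $C$ slightly larger than $1$, but never $C=1$. \emph{(b) Laminates.} Push the Astala--Faraco--Sz\'{e}kelyhidi laminate constructions to show that the rank-one convex envelope of $\mathcal B_p$ already coincides with its quasiconvex envelope; at present laminates only certify the \emph{consistency} of the target constant — no constant below $p^*-1$ is admissible — not the upper bound itself. \emph{(c) Holomorphic motions and interpolation.} Pursue the circle of ideas in \cite{AIPS}, which relates the conjecture to the quasiconvexity of Burkholder integrals and, via holomorphic deformation, complex interpolation and Astala's area-distortion theorem, yields partial positive results and some of the sharpest available estimates for $\mathbf S$ and for operators built from it. My expectation is that the decisive ingredient will be a genuinely two-dimensional structural fact — either a direct proof that $\mathcal B_p$ is quasiconcave, or a modification of the Burkholder function by a term of divergence structure that renders it quasiconcave — so that routes (a)--(c) are each one new idea short of the full theorem.
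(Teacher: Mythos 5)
The statement you were asked to prove is Conjecture~1.1 of the paper, and the paper itself offers no proof of it: it is the (still open) Iwaniec conjecture on the $\mathscr L^p$-norm of the Beurling transform, and the entire survey is an account of partial results surrounding it. Your proposal, to its credit, does not pretend otherwise. What you actually establish is (a) the lower bound $S_p\geqslant p^*-1$, which is indeed classical (Lehto's power-type test functions, or Astala's area-distortion theorem), and (b) the reduction of the upper bound to the quasiconcavity at the zero matrix of the Burkholder function. That reduction is exactly the one the paper records: your condition (i) with the constant $p\left(1-1/p^*\right)^{p-1}$ is the paper's inequality (2.4), your candidate $\mathcal B_p$ is the Burkholder function (2.6), and your remaining task — ``upgrade rank-one concavity to quasiconcavity at $0$'' — is precisely the paper's Conjecture~3.1, which the authors note is still sufficient for Conjecture~1.1. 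So your route coincides with the paper's own framing rather than departing from it.

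The genuine gap is therefore the one you name yourself: step (iii), quasiconcavity of $\mathcal B_p$ at $A=0$, is not proved, and none of your three strategies (a)--(c) closes it — you say as much. As a referee I must record that the proposal is not a proof of the statement; it is a correct and well-informed reduction to an open problem, matching the state of the art reflected in the paper (Theorem~2.5 of the survey gives quasiconcavity only within $K$-quasiconformal extensions of the identity with $1/K=1-2/p$, and quasiconcavity at zero of the higher-dimensional functional is known only for $\lambda$ near $1$). Two small factual cautions: Šverák's counterexample lives in $\mathbb R^{m\times n}$ with $m\geqslant 3$, so the honest statement is that the planar square-matrix case $m=n=2$ is precisely what remains open; and your claim that martingale refinements give $S_p\leqslant C(p^*-1)$ with $C$ ``slightly larger than $1$'' should be sourced — the best published constants (Bañuelos--Janakiraman and successors) are of that flavour but the numerical values matter if you intend to cite them.
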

This amounts to saying that
\begin{equation}\label{p-norm2}
  \Big{\|} \frac{\partial f}{\partial z}\,\Big{\|}_{\mathscr L^p(\mathbb C)}  \;\leqslant \; (p^* - 1) \,\Big{\|} \frac{\partial f}{\partial \bar{z}}\,\Big{\|}_{\mathscr L^p(\mathbb C)}\;,\;\;\;\;\textnormal{for}\;\;\; f \in \mathscr C^\infty _\circ(\mathbb C)
\end{equation}
or, equivalently
\begin{equation}\label{p-norm3}
  \iint _\mathbb C \Big{\{} \,|f_z(z)|^p \; - (p^*-1)^p |f_{\bar{z}}(z)|^p \Big{\}}  \;\textnormal d z   \leqslant \;0\;
  \;,\;\;\textnormal{for}\;\; f \in \mathscr C^\infty _\circ(\mathbb C)
\end{equation}

 Here the complex derivatives
$$\,\frac{\partial }{\partial \bar z }\, = \frac{1}{2} \left(\frac{\partial}{\partial x } \;+\; i \,\frac{\partial}{\partial y } \right ) \;\;\textnormal{and} \;\;\frac{\partial }{\partial z }\, = \frac{1}{2} \left(\frac{\partial}{\partial x } \;-\; i \,\frac{\partial}{\partial y } \right )  \,,\;\;z = x\, + \,i \,y\; $$
represent exactly two homotopy classes of the first order elliptic operators. These two classes are characterized  by the following topological property of the solutions to the corresponding homogeneous equations. In the class represented by $\,\partial /\partial \bar z\,$ the solutions are orientation preserving (with nonnegative Jacobian), whereas in the class of  $\,\partial /\partial z\,$ the solutions are orientation reversing. One of the strategic tasks for the theory of complex elliptic systems (linear and nonlinear)  is to establish precise $\,\mathscr L^p\,$-transition from $\,\frac{\partial f}{\partial \bar{z}}\,$ to  $\,\frac{\partial f}{\partial \bar{z}}\,$, which is the \textit{Beurling Transform}, .

 Thoughtful evidence to support Conjecture \ref{p-normConjecture} can be found in many articles. The interested reader is referred  to \cite{As2,AIMb,AIS,BM,Ball2,Ba1,BJ1,BJ2,BL,BW, BSV, DPV,DV,EH,GMS,Le,H,Iw4,Iw3,Iw5,Iw1,IKp,IMb,P,PW,NTV,VN} for numerous attempts, partial results and related topics.
 This elegant  
 mathematical problem has profound connections with the fundamental work of D.L. Burkholder on martingale inequalities and stochastic integrals \cite{Bu2,Bu1,Ba1,BJ1,BJ2,BL,BW,DV,GMS,L}, see  the extended survey article by R. Ba\~{n}uelos \cite{Ba1}. In fact the probabilistic study of the Beurling Transform was initiated in \cite{BW, L}, by  applying the  Burkholder integrals.
 Also some analogues of the Burkholder integrals have been found and developed for this purpose in dimensions $\,n\geqslant 2\,$, see \cite{Iw1,IMb,IL}. Today the studies of the Burkholder functions appear the most promising approach to Conjecture \ref{p-normConjecture}. The purpose of this note is to give a survey of the Burkholder functions from this point of view.	\\
\bigskip

\section{ A. Beurling, D. Burkholder and C.B. Morrey}  


 A continuous function  $\,
\mathbf E : \mathbb R^{\,m\times n} \rightarrow \mathbb R\,$, defined on the space of  $\,m\times n\, $-matrices, is said to be \textit{quasiconvex at} $\,A \in \mathbb R^{\,m\times n}\,$ if
\begin{equation}\label{quasiconvexity}
  \int_{\mathbb R^n} \left[\,\mathbf E(A + D\eta) \,-\,\mathbf E(A) \, \right]  \;\geqslant 0\,, \;\;\;\;  \textnormal{for every}\;\;\,\eta \in \mathscr C^\infty_0 (\mathbb R^{\,n}, \mathbb R^{\,m})\,.\end{equation}
 Here $ \, \eta  : \mathbb R^n \rightarrow  \mathbb R^m\,$ is a smooth mapping with compact support. We call $\mathbf E\,$  \textit{quasiconvex} if (\ref{quasiconvexity}) holds for all matrices $\,A \in \mathbb R^{\,m\times n}\,$.
 Quasiconvexity yields  convexity in the directions of rank-one matrices $\,X \in \mathbb R^{\,m\times n}$. Precisely, if $\mathbf E\,$ is quasiconvex,  then for every $\,A \in \mathbb R^{\,m\times n}\,$ the function of real variable t:
 \begin{equation} \;\;\;  \, t \mapsto \mathbf E(A+ t\,X)\,\textnormal{ is convex} \textnormal { whenever} \;\,rank\,X = 1\,.\end{equation}

 We refer to this later property of $\,\mathbf E\,$ as \textit{rank-one convexity}, see the seminal paper by C. B. Morrey \cite{M}.

 In general (in higher dimensions), the rank-one convexity does not imply quasiconvexity, see the famous example by V. \v{S}ver\'{a}k
\cite{Sv}.  C.B. Morrey himself was not quite definite in which direction he though things should be true \cite{M}. Nowadays, the case $\,m = n = 2\,$ remains an enigma for complex analysts \cite{BM, FS, Mu2, PS, Sv}. Our own thoughts in the spirit of Morrey's fundamental vision is the following.
\begin{conjecture}  \label{MorreyProblem}
The rank-one convex functions $\,\mathbf E : \mathbb R^{2\times 2} \rightarrow \mathbb R\,$ are quasiconvex.
\end{conjecture}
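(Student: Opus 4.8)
\emph{Step 1 (reduction to gradient Young measures).}  The plan is to transfer the problem into the dual language of Young measures.  By the Kinderlehrer--Pedregal characterization theorems --- assuming, after the standard truncation of rank-one convex functions, that $\mathbf E$ has at most polynomial growth --- quasiconvexity of $\mathbf E$ says exactly that Jensen's inequality $\inn{\nu,\mathbf E}\ge\mathbf E(\inn{\nu,\id})$ holds for every compactly supported homogeneous gradient Young measure $\nu$ on $\R^{2\times2}$, while rank-one convexity is the same inequality restricted to the subclass of laminates.  Since laminates are gradient Young measures, Conjecture~\ref{MorreyProblem} is equivalent to the purely geometric assertion that \emph{every compactly supported homogeneous gradient Young measure on $\R^{2\times2}$ is a laminate}, equivalently that the rank-one convex hull and the quasiconvex hull coincide for every compact subset of $\R^{2\times2}$.

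\emph{Step 2 (the complex picture).}  Next I would work through the isomorphism $\R^{2\times2}\cong\C^2$, $Df\leftrightarrow(f_z,f_{\bar z})$, under which $\det Df=|f_z|^2-|f_{\bar z}|^2$, rank-one matrices correspond to pairs $(\alpha,\beta)$ with $|\alpha|=|\beta|$, and on the torus a periodic gradient field is precisely $(a+\mathbf S h,\,b+h)$ with $(a,b)$ constant and $h$ periodic of mean zero, $\mathbf S$ the periodic Beurling transform; quasiconvexity then reads $\iint_{\T^2}[\mathbf E(a+\mathbf S h,\,b+h)-\mathbf E(a,b)]\ge0$ for all such $h$.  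The structural feature that makes $n=2$ tractable \emph{in principle} is that, up to affine terms, $\det$ is the only null Lagrangian --- which is what excludes \v{S}ver\'ak-type counterexamples \cite{Sv} and, one hopes, forces laminarity.  Since the rank-one convex cone is, in a suitable sense, generated by its extreme rays, a natural first step is to handle the rotationally symmetric extremals, conjecturally the Burkholder functions $\mathbf B_p$; but quasiconvexity of $\mathbf B_p$ is precisely Conjecture~\ref{p-normConjecture}, still open for every $p\ne2$, so even this sub-case already requires new input, and one must eventually pass from such a family to an arbitrary rank-one convex $\mathbf E$.

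\emph{Step 3 (holomorphic deformation).}  To reach the inequality $\iint_{\T^2}[\mathbf E(a+\mathbf S h,b+h)-\mathbf E(a,b)]\ge0$ I would try the deformation mechanism behind \cite{AIPS}: embed $Df$ into a holomorphic motion $\lambda\mapsto f^\lambda$, $\lambda\in\DD$, driven by a $\lambda$-holomorphic family of Beltrami coefficients with $f^{0}$ affine and $f^{\lambda_0}=f$, and exploit the observation that $\mathbf E$ restricted to any complex line $\{A+\lambda X:\lambda\in\C\}$ spanned by a rank-one $X$ is convex --- hence subharmonic --- because every direction in that real $2$-plane is again rank-one.  If the functional $\lambda\mapsto\iint_{\T^2}[\mathbf E(Df^\lambda)-\mathbf E(A)]$, or a suitable lift of it, can be arranged to be subharmonic and invariant under $\lambda\mapsto e^{i\theta}\lambda$, then, vanishing at $\lambda=0$, it must be nonnegative throughout $\DD$ --- which is the required inequality.

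\emph{Step 4 (the main obstacle).}  The difficulty --- and the reason Conjecture~\ref{MorreyProblem} has resisted for so long --- sits at the junction of Steps 1 and 3.  On the measure-theoretic side there is no rigidity theorem strong enough to resolve an arbitrary planar gradient Young measure into rank-one laminations: the positive evidence --- M\"uller's theorem for diagonal matrices, the staircase laminates of Astala--Faraco--Sz\'ekelyhidi --- shows laminates to be unexpectedly rich, yet stops short of a general construction, and the delicate behaviour of planar $T_4$-configurations shows that the laminar combinatorics does not close in any naive way.  On the deformation side, the initial velocity $\D_\lambda(f^\lambda_z,f^\lambda_{\bar z})$ at $\lambda=0$ is a Beurling-type singular integral of the initial rank-one perturbation, hence is \emph{not} pointwise rank-one, and so subharmonicity of $\lambda\mapsto\iint_{\T^2}[\mathbf E(Df^\lambda)-\mathbf E(A)]$ does not follow from rank-one convexity alone.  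Bridging this gap --- either by a genuinely new structure theorem for planar gradient Young measures, or by a deformation that is ``rank-one on average'' in precisely the sense the subharmonic comparison demands --- is where I expect the real work to lie.
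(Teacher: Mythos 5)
This statement is Conjecture~\ref{MorreyProblem}; the paper offers no proof of it, and none exists in the literature --- it is the open two-dimensional case of Morrey's problem. Your write-up is therefore correctly calibrated in spirit: it is a survey of attack routes rather than a proof, and you say so yourself in Step~4. But since the task is to assess it as a proof attempt, the verdict is that there is a genuine gap, and it sits exactly where you locate it. Step~1 does not reduce the problem to anything easier: the assertion that every compactly supported homogeneous gradient Young measure on $\mathbb R^{2\times 2}$ is a laminate is not a known geometric fact you can invoke --- it is precisely equivalent to the conjecture (via Kinderlehrer--Pedregal), so the ``reduction'' is a restatement. Step~3 then fails at the point you flag: for the holomorphic-motion argument of \cite{AIPS} to yield subharmonicity of $\lambda\mapsto\iint\mathbf E(Df^\lambda)$ from rank-one convexity alone, the infinitesimal deformation would have to be pointwise rank-one, whereas it is a Beurling-type singular integral of the perturbation and is not. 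Even the special case you single out as the natural first target --- quasiconcavity of the Burkholder functions $\mathbf B_p$ --- is Conjecture~\ref{ConjOnBurkholderFunctionals} of this paper and is open for every $p\neq 2$; Theorem~\ref{AIPS} establishes it only within the restricted class of $K$-quasiconformal extensions of the identity with $1/K = 1-2/p$, not for arbitrary Sobolev competitors.

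One further caution on Step~2: the claim that in the plane ``$\det$ is the only null Lagrangian, which excludes \v{S}ver\'ak-type counterexamples'' is heuristic, not a theorem. \v{S}ver\'ak's construction \cite{Sv} fails to transplant to $2\times 2$ matrices for dimensional reasons, but no structural statement is known that rules out a different counterexample in the plane; the paper itself records that ``the case $m=n=2$ remains an enigma.'' So your proposal is a fair and well-informed map of the territory --- the Young-measure dual formulation, the complex/Beurling reformulation, the Burkholder extremals, and the holomorphic deformation are all the right landmarks --- but it establishes nothing beyond what the paper already states as conjecture, and the two bridges it would need (laminarity of planar gradient Young measures, or a rank-one-on-average deformation) are both missing.
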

The dual concepts of \textit{quasiconcave} and \textit{rank-one concave} functions are formulated analogously: simply, we replace the word convex by concave. Equivalently, this amounts to considering $\, -\mathbf E\,$ instead of $\, \mathbf E\,$.
The most famous (and, arguably, the most important) example in two dimensions is the rank-one concave energy integral:
 \begin{equation}\label{BurkholderComplex}
\mathscr B^p_\Omega [f] \bydef  \int_\Omega \big[\,|f_z|\,-\, (p^{_\ast}-1) |f_{\bar z}| \, \big ] \cdot \big [\, |f_z| + |f_{\bar z}|\,\big ] ^{p-1} \, \textnormal d z \;,\;\;\; 1 < p < \infty
\end{equation}
  Such terms for the energy functionals pertain to all sorts of variational integrals whose integrands are rank-one concave functions (rank-one convex, quasiconvex, etc., respectively).
Here and in the sequel  we identify the gradient matrix $\,Df\,$ with the complex differential $\,df = f_{z}\,\textnormal{d}z \;+\; f_{\bar z}\,\textnormal{d}\bar{z} \,$\, or a pair of complex derivatives, whenever convenient. Accordingly,   $\, Df(z) \simeq \big(f_z , f_{\bar{z}}\big) \in \mathbb C \times \mathbb C\,\simeq \mathbb R^{2\times2}\,$. 

The special interest in the  function \eqref{BurkholderComplex} within the studies of the Beurling operator arises from the inequality 
\begin{equation} \label{BeBu}
C_p \cdot \big(\,|f_z|^p \; - (p^*-1)^{p} |f_{\bar{z}}|^{p} \bigr)  \leqslant \, \big(\,|f_{z}|\,-\,(p^*-1)\,|f_{\bar{z}}| \big)\cdot \big(\,|f_z|\;+\;|f_{\bar{z}}|\big)^{p-1} 
\end{equation}
which can be shown by elementary means, see e.g. \cite[Lemma 6.3.20]{stroock}. The positive constant $\,C_p = p\left(1 - \frac{1}{p^*}  \right)^{p-1}$ for $p>1$.
Thus in particular, Conjecture \ref{p-normConjecture} follows if one can prove that the Burkholder functions \eqref{BurkholderComplex} are quasiconcave at $A = 0$.

We shall work with the operator norm
 $$\, |Df(z)| = \max \{|Df(z)\, \textbf{v} | \,;\; |\textbf{v}| = 1\,\}\,=\, | f_z| + |f_{\bar{z}}|,$$
and the Jacobian determinant $$\,J_f(z) \bydef \det Df(z)  = |f_z|^2 - | f_{\bar{z}}|^2 \,.$$ In these terms the foregoing energy integral (\ref{BurkholderComplex}) can be expressed as:
\begin{equation}\label{BurkholderReal}
\mathscr B^p_\Omega [f] \;=\;   \frac{p^{_\ast}}{2}\int_\Omega \left[\, \det Df \, - \big| 1 - 2/p \,\big| \, |Df |^2 \, \,\right ] \cdot |\,Df\,| ^{p-2}\;\;.
\end{equation}

  That a pair of complex numbers $\, A = (\xi, \zeta) \in \mathbb C \times \mathbb C\,$ represents a rank-one matrix simply means that  $\,|\xi| = |\zeta| \neq 0\,$.
The nonlinear algebraic expression
\begin{equation}\label{BurkholderFunction}
\mathbf B_p\,(\xi, \zeta) \;\bydef\;  \big[\,|\,\xi|\,-  ( p^{_\ast}- 1)\, |\,\zeta| \, \,\big]\cdot \big[\, |\,\xi|\, + \,|\,\zeta|\,\big ] ^ {p-1}\,,
\end{equation}
(for vectors $\,\xi\,$ and $\,\zeta\,$ in any real or complex Hilbert space)  has emerged in Burkholder's theory of stochastic integrals and martingale inequalities \cite{Bu2, Bu1}.  He shows that the function $\, t \mapsto \,\mathbf B_p\,(\xi + t \,\alpha\,,\, \zeta + t\, \beta)\,$ of a real variable $\,t\,$ is concave whenever $\,|\alpha| \leqslant |\beta |\,$; in particular, if $\,|\alpha| = |\beta |\,$. Burkholder's computation, although planned for different purposes, when combined with (\ref{BurkholderComplex}) and (\ref{BurkholderReal})  reveals that $\,\mathscr B^p_\Omega [f]\,$ is rank-one-concave. It is this connection between Morrey's problem and Burkholder's work that inspired a search for the $\,n$\,-dimensional analogues of the rank-one-convex functionals suited to the $\,\mathscr L^p\,$-theory of quasiregular mappings \cite{Iw1}. Let us state it as:
\begin{theorem}
The matrix function $\,\mathbf E : \mathbb R^{n\times n} \rightarrow \mathbb R\,$, defined by
\begin{equation}\label{BurkholderHigherDimensional}
\mathbf E (A) \;\bydef\, \left[\; \pm\; \det A \,- \,\lambda \, |A|^n \,\right] \cdot  |A|^{p-n} \;,
\end{equation}
is rank-one-concave for all parameters $\,\lambda \geqslant |1 - \frac{n}{p} |\,$ and $\, p \geqslant \frac{n}{2}\,$. Moreover, $\,|1 - \frac{n}{p} |\,$ is the smallest value of $\,\lambda\,$ for which the rank-one-concavity holds.
\end{theorem}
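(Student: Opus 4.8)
\medskip\noindent\textit{Plan of proof.}
The strategy is to reduce rank-one concavity to an explicit algebraic inequality obtained by differentiating $\mathbf E$ twice along a rank-one line. Two preliminary reductions. Since $|A|^{p}$ is a power $p\ge 1$ of the operator norm, it is convex and hence rank-one convex, so $-\lambda|A|^{p}$ is rank-one concave and $\mathbf E_{\lambda'}=\mathbf E_{\lambda}-(\lambda'-\lambda)|A|^{p}$ is rank-one concave whenever $\mathbf E_{\lambda}$ is and $\lambda'>\lambda$; thus the admissible parameters form an interval $[\lambda^{*},\infty)$, and it suffices to treat the critical value $\lambda=|1-n/p|$. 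Moreover $\mathbf E$ is $p$-homogeneous and invariant under $A\mapsto UAV^{\top}$ with $U,V\in SO(n)$ (replacing one of $U,V$ by an orthogonal matrix of determinant $-1$ only interchanges the two signs $\pm\det$), and rank-one matrices are carried to rank-one matrices. Hence it is enough to prove, for a diagonal matrix $A=\mathrm{diag}(a_{1},\dots,a_{n})$ normalized so that $a_{1}=|A|=1>|a_{j}|$ for $j\ge 2$, that
\[
\mathcal Q:=\frac{d^{2}}{dt^{2}}\Big|_{t=0}\mathbf E(A+tX)\le 0\qquad\text{for every rank-one }X=vw^{\top};
\]
the degenerate configurations where some $a_{j}$ equals $0$ or $\pm1$ follow afterwards from the continuity of $\mathbf E$.

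\medskip\noindent
Next I would work out the two ingredients of $\mathcal Q$. The determinant is \emph{affine} along a rank-one line, $\det(A+tX)=\det A\,\bigl(1+t\,\mathrm{tr}(A^{-1}X)\bigr)$, so it has vanishing second derivative; concretely $\det(A+tX)=P+tP\bigl(v_{1}w_{1}+\sum_{j\ge 2}v_{j}w_{j}/a_{j}\bigr)$ with $P=a_{2}\cdots a_{n}$. The operator norm is $N(t)=|A+tX|=\sqrt{\mu_{1}(t)}$, where $\mu_{1}$ is the largest eigenvalue of $(A+tX)^{\top}(A+tX)$; since $\mu_{1}(0)=a_{1}^{2}=1$ is a \emph{simple} eigenvalue of $A^{\top}A$, first and second order perturbation theory for a simple eigenvalue give $N'(0)=v_{1}w_{1}$ and
\[
N''(0)=\sum_{j\ge 2}\Bigl[\,w_{1}^{2}v_{j}^{2}+\frac{(v_{1}w_{j}+a_{j}v_{j}w_{1})^{2}}{1-a_{j}^{2}}\,\Bigr].
\]
Plugging these into $\frac{d^{2}}{dt^{2}}\bigl[N^{p-n}\det(A+tX)-\lambda N^{p}\bigr]$ at $t=0$ renders $\mathcal Q$ as an explicit form, quadratic in $v$ and quadratic in $w$.

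\medskip\noindent
The decisive point is that, the determinant being affine and $N''(0)$ a \emph{sum} over the off-diagonal indices, $\mathcal Q$ splits as
\[
\mathcal Q=\underbrace{\bigl[(p-n)(p-n+1)P-\lambda p(p-1)\bigr](v_{1}w_{1})^{2}}_{D}\;+\;\sum_{j\ge 2}T_{j},
\]
where each $T_{j}$ is a quadratic form in the single pair $(v_{j},w_{j})$. Optimizing over the variables now decouples, so $\mathcal Q\le 0$ for all $v,w$ if and only if $D\le 0$ and every $T_{j}$ is negative semidefinite. A short $2\times2$ computation, with $c:=(p-n)P-\lambda p$ and $R:=P/a_{j}$, shows $T_{j}$ to be negative semidefinite exactly when $c\le 0$ and $|c|\ge|(p-n)R-\lambda p\,a_{j}|$. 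As the remaining singular values vary, $P$ ranges over $(-1,1)$, so the requirement that $c\le 0$ for \emph{every} admissible $A$ holds precisely when $\lambda p\ge|p-n|$, i.e. $\lambda\ge|1-n/p|$; and at $\lambda=|1-n/p|$ the second condition on $T_{j}$ collapses, after squaring, to the identity $(1-R^{2})(1-a_{j}^{2})\ge 0$, hence is automatic, while the requirement $D\le 0$ for every $A$, namely $|(p-n)(p-n+1)|\le\lambda p(p-1)$, becomes $|p-n+1|\le p-1$, which (using $n\ge 2$) is exactly $p\ge n/2$. This proves rank-one concavity for $\lambda\ge|1-n/p|$ and $p\ge n/2$. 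Sharpness runs the first reduction backwards: if $\lambda<|1-n/p|$, one may choose the $a_{j}$ so that $(p-n)P>\lambda p$; then $c>0$, the $v_{j}^{2}$-coefficient of each $T_{j}$ becomes positive, $\mathcal Q$ attains positive values along the corresponding rank-one line, and rank-one concavity fails — so $|1-n/p|$ is indeed the least admissible $\lambda$.

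\medskip\noindent
The principal obstacle I anticipate is computational rather than conceptual: producing the second-order expansion of $N(t)=|A+tX|$ in closed form — in particular the off-diagonal sum from the second order perturbation of the top singular value — and then carrying out the $2\times2$ semidefiniteness bookkeeping for the forms $T_{j}$ without sign errors, while keeping careful track of how the remaining singular values enter through $P$ and $R=P/a_{j}$. Once the two structural facts are in place — that the determinant is affine along rank-one lines and that the perturbation decouples index by index — the problem collapses to the handful of one-line inequalities above, from which the thresholds $\lambda=|1-n/p|$ and $p=n/2$ emerge transparently.
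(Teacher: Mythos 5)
The paper itself gives no proof of this theorem: it is quoted from the literature (Iwaniec, \emph{Nonlinear Cauchy--Riemann operators in $\mathbb R^n$}), so there is no in-paper argument to compare yours against. Judged on its own terms, your computational core is sound and I have checked the key identities: the determinant is indeed affine along rank-one lines; the second-order perturbation formula for the top singular value is correct when that singular value is simple; after substitution the Hessian does split as $D+\sum_{j\ge2}T_j$ with $T_j=\tfrac{1}{1-a_j^2}\bigl[c(r^2+s^2)+2((p-n)R-\lambda p\,a_j)rs\bigr]$ in the variables $r=v_1w_j$, $s=v_jw_1$; the condition $c\le0$ for all $P\in(-1,1)$ is exactly $\lambda\ge|1-n/p|$; the discriminant condition at the critical $\lambda$ reduces to $(1-R^2)(1-a_j^2)\ge0$; and $D\le0$ at the critical $\lambda$ is exactly $p\ge n/2$. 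The sharpness construction ($v=e_j$, $w=e_1$, all $|a_k|$ near $1$ with signs making $(p-n)P>\lambda p$) is valid. One cosmetic overstatement: the ``if and only if'' in the decoupling step is not literally true, since the pairs $(r_j,s_j)$ are constrained (e.g.\ $r_js_j=(v_1w_1)(v_jw_j)$ couples them to the variable in $D$); but only the ``if'' direction is used for sufficiency, and sharpness is handled by an explicit test pair, so nothing breaks.

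The genuine gap is the dismissal of the degenerate configurations ``by continuity of $\mathbf E$.'' Continuity alone does not upgrade ``$\mathcal Q\le0$ at all nondegenerate points'' to concavity along every rank-one line, because the degenerate set can contain entire segments of such a line, not just isolated points. For example, with $A=I$ and $X=e_1e_1^{\top}$ the singular values of $A+tX$ are $|1+t|,1,\dots,1$, so for all $t\in(-2,0)$ the top singular value equals $1$ with multiplicity $n-1$: your perturbation formula for $N''$ is inapplicable on a whole interval, and a continuous function that satisfies $g''\le0$ only on the complement of such a set need not be concave (the distributional second derivative could have a positive singular part there, e.g.\ through an upward kink in $g'$ at the endpoints of the degenerate interval). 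The standard repairs are either (i) to treat the degenerate strata directly (on such intervals $N$ is still locally given by a smooth branch --- in the example $N\equiv1$ and $g$ is affine --- and one must also match one-sided derivatives at transition points), or (ii) to mollify, proving the estimate for $\mathbf E\ast\Phi_\varepsilon$ or for generic perturbations $A^\varepsilon$, $X^\varepsilon$ for which the bad set along the line is finite, and then pass to the limit using that concavity is closed under locally uniform convergence. Either route is routine but takes a real paragraph; as written, this step is not justified. Everything else in your proposal I would accept.
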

\begin{definition}
We refer to (\ref{BurkholderFunction})  and its $\,n\,$-dimensional analogue (\ref{BurkholderHigherDimensional})  as \textit{Burkholder functions}. \\

Note that changing $\,\pm\,$ into $\,\mp\,$ in (\ref{BurkholderHigherDimensional}) results in the interchange of $\,|f_z|\,$ and $\,|f_{\bar{z}}|\,$ in (\ref{BurkholderFunction}). In particular, the rank-one concavity is unaffected. We confine ourselves to discussing the case of plus sign.
\end{definition}
\begin{conjecture}\label{ConjOnBurkholderFunctionals} Burkholder functions are quasiconcave.
\end{conjecture}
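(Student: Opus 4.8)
The plan is to treat the planar Burkholder function \eqref{BurkholderComplex} first — it is the decisive case, reducing via the pointwise estimate \eqref{BeBu} to Conjecture \ref{p-normConjecture} and being a distinguished special case of Morrey's Conjecture \ref{MorreyProblem} — and then the higher-dimensional functionals \eqref{BurkholderHigherDimensional}. In every case the preceding Theorem already supplies rank-one concavity, so the whole burden is the passage from that pointwise inequality to the nonlocal integral inequality \eqref{quasiconvexity}. It is worth recording why nothing soft can succeed: were $\mathbf B_p$ polyconcave one would be done at once by Jensen's inequality applied to the subdeterminants (a null-Lagrangian argument), but $\mathbf B_p$ fails to be polyconcave whenever $p\neq n$, and by \v{S}ver\'{a}k's example rank-one concavity by itself never forces quasiconcavity. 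Any proof must therefore exploit a genuine extra feature of $\mathbf B_p$: that, modulo the null Lagrangian $\det Df$, it is a power of the operator norm $|Df|$, and that on the conformal/anticonformal splitting $Df\simeq(f_z,f_{\bar z})$ it is governed by Burkholder's one-dimensional concave profile on $\mathbb R_+\times\mathbb R_+$.

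The route I would pursue first is the stochastic one, since Burkholder's computation says precisely that $\mathbf B_p$ is a \emph{supersolution}: if $(X_t)$ and $(Y_t)$ are martingales valued in $\C$, or in any Hilbert space, and $X$ is differentially subordinate to $Y$, meaning $\abs{dX_t}\leqslant\abs{dY_t}$, then $t\mapsto\mathbf B_p(X_t,Y_t)$ is a supermartingale, whence $\mathbb E\,\mathbf B_p(X_\infty,Y_\infty)\leqslant\mathbf B_p(X_0,Y_0)$. The steps would be: write a test map as $f=\ell+\eta$ with $\ell$ linear of gradient $A=(a,b)$ and $\eta\in\mathscr C^\infty_\circ(\C)$; take the heat extension of $f_{\bar z}$ to the half-space $\C\times(0,\infty)$; run Brownian motion from far out in the extension variable, stopped when it first hits $\C\times\{0\}$, forming the Itô martingale $Y_t$ from that extension and the martingale transform $X_t$ built with the matrix representing the Beurling transform $\mathbf S$, for which $\mathbf S f_{\bar z}=f_z$, so that $Y_\infty\simeq f_{\bar z}$, $X_\infty\simeq f_z$ on the boundary and $X_0=a$, $Y_0=b$; verify the differential subordination $\abs{dX_t}\leqslant\abs{dY_t}$, apply the supermartingale property, and rewrite $\mathbb E\,\mathbf B_p(X_\infty,Y_\infty)\leqslant\mathbf B_p(X_0,Y_0)$ as $\int_\C[\mathbf B_p(f_z,f_{\bar z})-\mathbf B_p(A)]\leqslant0$, which is exactly \eqref{quasiconvexity}. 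The obstacle — and this is the entire difficulty — is the middle step: the Beurling transform is not itself a differentially subordinate martingale transform but is represented by one only after an averaging (a conditional expectation, a projection) that costs a bounded factor, so the construction delivers the inequality with a constant $C\,(p^\ast-1)$, $C>1$, in place of $p^\ast-1$; what is still missing is a representation of $\mathbf S$ by differentially subordinate martingales that is \emph{sharp}, so that Burkholder's inequality is saturated in exactly the right place.

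Two complementary plans should be run in parallel. The first, following \cite{AIPS}, is to embed the estimate in a holomorphic family: let $f^\tau$ solve $f^\tau_{\bar z}=\tau\mu\,f^\tau_z$ with $\|\mu\|_\infty\leqslant1$ and $\tau$ in a disc or strip, so that $\tau\mapsto Df^\tau$ is holomorphic, use Astala's area-distortion theorem to control the borderline Sobolev regularity, and carry the inequality by a subharmonicity (Hadamard three-lines) argument in $\tau$ from the trivial endpoints — $\tau=0$, where $\mathbf S$ acts as the identity on the relevant subspace, and $p=2$, where $S_2=1$ by Plancherel — to the critical line, afterwards removing the restriction to principal solutions by bootstrapping on $p$ and a density argument; here the obstacle is the endpoint $\abs{\tau}\to1$, where the required regularity is itself equivalent to Conjecture \ref{p-normConjecture} at a nearby exponent, so the method presently reaches only $p$ near $2$. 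The second, for $n\geqslant3$ and for a general base $A\neq0$, exploits that the rank-one cone through $A$ is lower-dimensional and $\mathbf E$ is smooth near it: one would verify quasiconcavity locally from the second-variation inequality furnished by rank-one concavity together with a decomposition $\mathbf E(A+X)=\Phi(A+X)+R(A,X)$ with $\Phi$ polyconcave and $R$ of one sign on the gradients that actually occur, then patch by slicing. Whichever route one takes, I expect the crux to be the same: quasiconcavity is a genuinely infinite-dimensional, nonlocal property that \v{S}ver\'{a}k's example forbids extracting from the pointwise rank-one inequality by any general principle, so everything hinges on finding the one structural identity for $\mathbf B_p$ still missing — plausibly a hidden divergence form of the defect $\mathbf B_p^{\,\mathrm{qc}}-\mathbf B_p$, or an exact Bellman/Itô identity — and writing that identity down, rather than assembling the machinery around it, is the real difficulty.
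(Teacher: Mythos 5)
The statement you are addressing is Conjecture \ref{ConjOnBurkholderFunctionals}. The paper offers no proof of it --- it is stated, and left, as an open problem --- and your text does not supply one either: each of the three routes you lay out is abandoned at precisely the point where the conjecture lives, and you say so yourself. Concretely: (i) in the stochastic route, Burkholder's supermartingale property under differential subordination does give the sharp constant $p^{\ast}-1$ for martingale transforms, but every known representation of the Beurling transform by such transforms (heat extension, background radiation, space--time Brownian motion) involves an averaging or projection step that costs a multiplicative factor, which is exactly why \cite{BW}, \cite{DV}, \cite{BJ2} obtain bounds of the form $C\,(p^{\ast}-1)$ with explicit $C>1$ rather than the conjectured constant; no sharp representation is known, and producing one is equivalent in difficulty to the conjecture itself. (ii) The holomorphic-deformation route of \cite{AIPS} does not prove quasiconcavity; it proves Theorem \ref{AIPS}, i.e.\ the inequality $\int_\Omega \mathbf B_p(Df)\,\textnormal{d}z \le |\Omega|$ only within the class of $K$-quasiconformal extensions of the identity with $\frac{1}{K}=1-\frac{2}{p}$, equivalently under the pointwise constraint $\mathbf B_p(Df)\ge 0$ a.e.; removing that constraint is open, and even the weaker Conjecture \ref{Quasiconvexity-at-0} (quasiconcavity at $A=0$ only) remains unproved. (iii) Your third plan --- local verification near a general $A$ from the second variation, then ``patching by slicing'' --- cannot work as described: nonnegativity of the second variation along rank-one directions \emph{is} rank-one concavity, and \v{S}ver\'ak's example \cite{Sv} shows precisely that no local-to-global principle upgrades this to quasiconcavity; likewise no decomposition $\mathbf E=\Phi+R$ with $\Phi$ polyconcave and $R$ of a fixed sign on occurring gradients is available for $\mathbf B_p$ when $p\ne n$.

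A smaller correction to your framing: quasiconcavity at a general base point $A$ is not a separate, harder problem in the way your second plan suggests. By $p$-homogeneity, quasiconcavity at any single $A$ already implies quasiconcavity at $0$ (Section 3 of the paper), and conversely Section 10 shows that quasiconcavity at $0$ propagates to every $A$ admitting an extremal extension as in \eqref{extremalsAtZero} --- for instance to $A=\textnormal{Id}$ via Example \ref{MaximumEnergyExample}. So the correct target is Conjecture \ref{Quasiconvexity-at-0}, and the single missing ingredient is the sharp identity you name in your last sentence. Until that identity is produced, what you have written is a competent survey of obstructions and partial results, consistent with the paper, but it is not a proof and should not be presented as one.
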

\noindent Further analysis of this and related conjectures see \cite{BM, BW, BL}.

Recently \cite{AIPS} , substantial progress has been made toward Conjecture \ref {ConjOnBurkholderFunctionals} in dimension $\,n = 2\,$.

\begin{theorem}\label{AIPS}
For $\, \frac{1}{K} = 1 - \frac{2}{p}\,$, the Burkholder energy $\, \mathscr B_p \,[f] \,,\, p \geqslant 2\,,$ is quasiconcave within $\, K\,$-quasiconformal extensions $\, f : \Omega \rightarrow \Omega \,$ of the identity boundary map. This just amounts to the following inequality
\begin{equation}\nonumber
\int_\Omega \mathbf B_p (Df)\; \textnormal{d} z \,\leqslant \,\int_\Omega \mathbf B_p (I)\; \textnormal{d} z\; = |\,\Omega |\;,
\end{equation}
whenever $\,f(z) \equiv z\,$ on $\,\partial \Omega\,$ and \;$\; \mathbf B_p (Df(z)) \geqslant 0\;$,  almost everywhere in $\, \Omega\,$.

\end{theorem}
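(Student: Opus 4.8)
The plan is to reduce the statement to an estimate on the whole plane and then deform $f$ through a holomorphic family of principal solutions of a Beltrami equation, running a maximum-principle argument in the deformation parameter that is tuned to the critical exponent $p-2=p/K$. First I would extend $f$ to all of $\C$ by the identity outside $\Omega$; since $f$ agrees with the identity on $\partial\Omega$ this extension is still $K$-quasiconformal, fixes $\infty$, and is therefore the principal solution of $\bar\partial F=\mu\,\partial F$ whose coefficient $\mu:=\mu_f$ is supported in $\Omega$ with $\norm{\mu}_\infty\le k$, where $k=\frac{K-1}{K+1}=\frac1{p-1}$ (using $\frac1K=1-\frac2p$, $p\ge2$). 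For this critical $K$ the hypothesis $\mathbf B_p(Df)\ge0$ a.e.\ is merely a restatement of $K$-quasiconformality, since $\mathbf B_p(Df)\ge0\iff\abs{f_z}\ge(p-1)\abs{f_{\bar z}}\iff\abs{\mu_f}\le k$. Using \eqref{BurkholderReal} and $p^\ast-1=p-1$, the integrand can be rewritten on $\Omega$ as
\[
\mathbf B_p(Df)=\frac p2\Bigl[J_f-\tfrac1K\abs{Df}^2\Bigr]\abs{Df}^{p-2}=J_f^{\,p/2}\,h\bigl(\abs{\mu_f}\bigr),\qquad h(t)=\frac{\bigl(1-(p-1)t\bigr)(1+t)^{p/2-1}}{(1-t)^{p/2}},
\]
where $h(0)=1$, $h$ decreases on $[0,k]$, and — decisively — $h(k)=0$; moreover $p/2=K/(K-1)$ is exactly the borderline exponent of Astala's area distortion.

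Second, for $\lambda$ in the closed unit disc let $f_\lambda$ be the principal solution with Beltrami coefficient $\lambda\mu$; these are all $K$-quasiconformal, $f_0=\id$, $f_1=f$, and $\lambda\mapsto f_\lambda$ depends holomorphically on $\lambda$ into $W^{1,q}_{\loc}(\C)$ for each $q<p$. Hence for a.e.\ $z$ the derivative $\lambda\mapsto F_\lambda(z):=\partial_z f_\lambda(z)$ is holomorphic and nonvanishing, so $\lambda\mapsto\abs{F_\lambda(z)}^s$ is subharmonic for every $s>0$; also $J_{f_\lambda}(z)=\abs{F_\lambda(z)}^2\bigl(1-\abs\lambda^2\abs{\mu(z)}^2\bigr)$ and $\mathbf B_p(Df_\lambda)(z)=\abs{F_\lambda(z)}^p\,g\bigl(\abs\lambda\,\abs{\mu(z)}\bigr)$ with $g(t)=(1-(p-1)t)(1+t)^{p-1}$. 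The goal is to show that $\Psi(\lambda):=\int_\Omega\mathbf B_p(Df_\lambda)\,\dtext z$ satisfies $\Psi(\lambda)\le\abs\Omega$ on $\overline{\DD}$, whence the theorem follows at $\lambda=1$, since $\Psi(0)=\int_\Omega\mathbf B_p(I)\,\dtext z=\abs\Omega$.

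For this key estimate I would combine three ingredients: (i) the subharmonicity of $\lambda\mapsto\abs{F_\lambda(z)}^p$, hence the monotonicity of its circular means; (ii) the area identity $\int_\Omega J_{f_\lambda}\,\dtext z=\abs{f_\lambda(\Omega)}$ together with a sharp area-distortion bound, such as $\int_E J_{f_\lambda}\,\dtext z\le\abs\Omega^{1-1/K}\abs E^{1/K}$ for $E\subseteq\Omega$, which is exactly where $p/2=K/(K-1)$ is used; and (iii) the precise profile of $h$, above all the vanishing $h(k)=0$, which compensates the logarithmic defect of $L^{K/(K-1),\infty}$ at the borderline exponent — note that $\int_\Omega\abs{F_\lambda}^p\,\dtext z$ alone can be infinite ($F_\lambda\notin L^p$ in general), so the weight $h$ is needed merely for finiteness. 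The point is that, although the pointwise Burkholder concavity of $\mathbf B_p$ does not survive along the motion, an averaged, supermartingale-type comparison in $\lambda$ does, and combined with (ii) and (iii) it delivers $\Psi(1)\le\Psi(0)=\abs\Omega$.

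The main obstacle is making the sharp constant come out. A crude estimate — $\abs{Df}^2\le KJ_f$ inserted into \eqref{BurkholderReal}, yielding $\mathbf B_p(Df)\le\frac p2K^{(p-2)/2}J_f^{\,p/2}$ — leaves $\int_\Omega J_f^{\,p/2}\,\dtext z$ sitting at the critical exponent $K/(K-1)$, where Astala's theorem provides only weak-type control and loses the constant $\abs\Omega$. The holomorphic deformation must therefore be run so that the degeneracy of the Burkholder weight $h$ at the extreme dilatation absorbs that loss exactly; verifying the requisite subharmonicity and uniform integrability along the motion, and controlling the behaviour as $\abs\lambda\to1$, is the delicate part of the argument.
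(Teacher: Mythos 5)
First, a point of orientation: this survey does not prove Theorem \ref{AIPS} at all --- it imports the result from \cite{AIPS} --- so your proposal has to be measured against the proof given in that reference. Your overall strategy is in fact the one used there: extend $f$ by the identity to the principal solution of a Beltrami equation with coefficient $\mu$ supported in $\Omega$ and $\|\mu\|_\infty\leqslant k=1/(p-1)$, observe that the hypothesis $\mathbf B_p(Df)\geqslant 0$ a.e.\ is exactly $K$-quasiconformality at the critical coupling $1/K=1-2/p$, embed $f$ into the holomorphic family $f_\lambda$ of principal solutions with coefficient $\lambda\mu$, and exploit the vanishing of the Burkholder weight at the extremal dilatation together with area distortion at the borderline exponent $p/2=K/(K-1)$. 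All of this is correct, as is your algebra for the profiles $g$ and $h$ and the identification $h(k)=0$.

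The gap is that the entire content of the theorem is concentrated in the step you call ``an averaged, supermartingale-type comparison in $\lambda$,'' and that step is asserted rather than proved; moreover, the mechanism you gesture at cannot work as stated. Subharmonicity of $\lambda\mapsto|F_\lambda(z)|^{p}$ and monotonicity of its circular means produce inequalities in the \emph{wrong} direction: the sub-mean-value property bounds $\Psi(0)$ from above by circular averages of $\Psi$, and the maximum principle places the maximum of a subharmonic function on $\partial\DD$ --- which is precisely where the point $\lambda=1$ you need to control lives. So nothing of the form $\Psi(1)\leqslant\Psi(0)$ follows from ingredient (i), and since the weight $g(|\lambda|\,|\mu(z)|)$ is not harmonic in $\lambda$, the functional $\Psi$ is not obviously super- or subharmonic either. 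What \cite{AIPS} actually does at this juncture is prove a genuinely new interpolation lemma for holomorphic families of non-vanishing functions in which the integrability exponent is coupled to $|\lambda|$, so that at small $|\lambda|$ only the $L^2$/area information of your ingredient (ii) is needed, and as $|\lambda|$ increases the exponent climbs to $p$ exactly as the weight degenerates; establishing that lemma, and handling the failure of $L^p$-integrability of $\partial f_\lambda$ together with the limiting behaviour as $|\lambda|\to 1$ that you yourself flag, occupies most of that paper. Until you state and prove a precise version of that comparison, what you have is a correct road map to the known proof rather than a proof.
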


 Far reaching novelties follow from this result.  Among the strong corollaries, we obtained weighted integral bounds for $\,K\,$-quasiregular mappings $\,f : \Omega \rightarrow \mathbb C\,$ at the borderline integrability exponent  $\, p = \frac{2K}{K-1}\,$,
 \\

\begin{center}
$
 \left[\,K - K(x)\,\right ] \, |Df(x)|^{\frac{2K}{K-1}} \; \in \mathscr L^1_{\textnormal{loc}} (\Omega)\;,\;\;\;\;\;\;\; K(x) \bydef \frac{|\, Df(x) \,|\!|^2 }{ \,\det\, Df(x)} \, \leqslant K \,.
$
\end{center}
\vskip0.1cm

\noindent These sharpen and generalise  the optimal higher integrability bounds for quasiconformal mappings proven in \cite{As1, As2}

Among further consequences of Theorem \ref{AIPS} we find that quite general classes of radial maps are local maxima for $\mathscr B^p_\Omega [f] $. These facts will be elaborated in more detail in Section \ref{localmax}.

$\,K\,$-quasiconformal extensions $\, f : \Omega \rightarrow \Omega \,$ of the  identity boundary map $\,\textnormal{Id} :  \partial \Omega \rightarrow \partial \Omega \,$, with maximal  Burkholder energy, have been presented in \cite{AIPS}.  It became reasonable to speculate that Theorem  \ref{AIPS} presents Conjecture \ref{ConjOnBurkholderFunctionals} in its worst-scenario. The novelty of our approach lies in using an analytic family of the Beltrami equations, which manifests the intricate nature of Conjecture \ref{ConjOnBurkholderFunctionals}.  \\

\section{Enquiry on quasiconvexity at $\, 0 \in \mathbb R^{n\times n}\,$}
In spite of the example by V. \v{S}ver\'{a}k \cite{Sv}, which answers the general question of quasiconcavity of rank-one concave functions  in the negative, it is still reasonable to inquire about quasiconcavity at $\, A = 0 \in \mathbb R^{n\times n}\,$. Let us take a quick look at the integrands $\,\mathbf E : \mathbb R^{n\times n} \rightarrow \mathbb R\,$ which are $\,p\,$-homogeneous at infinity; that is,
$$
\mathbf E(t A) = t^p \,\mathbf E(A) + o (t^p)\;,\;\textnormal{uniformly as}\; |A| \leqslant \textnormal{constant} \; \;\;\textnormal{and}\; t \rightarrow \infty\,.
$$
Suppose $\,\mathbf E\,$ is quasiconcave at some  $\, A \in \mathbb R^{n\times n}\,$. It is not difficult to see that $\,\mathbf E\,$ is automatically quasiconcave at  $\, 0 \in \mathbb R^{n\times n}\,$. The converse is far from being true. This can easily be seen in case of the   \textit{Beurling} energy,
\begin{equation}
\mathscr F_\Omega^{p , M} [f] \,\bydef  \, \iint_\Omega \big[\, |f_z|^p -  M ^p |f_{\bar{z}}|^p \; \big] \;,\;\;\;\;\; f \in \mathscr W^{1,p}_\circ (\Omega)\;,\; p> 1
\end{equation}
where $\, M  \geqslant S_p\,$-the $\,\mathscr L^p\,$-norm of the Beurling transform. By the very definition of $\,S_p\,$ it follows that $\,\mathscr F^{p, M}_\Omega[f] \leqslant 0 \,$, for  $f \in \mathscr W^{1,p}_\circ (\Omega)\,$. In other words, the \textit{Beurling function}
\begin{equation} \label{BeurlingFunctional}
\,\mathbf F_p^M (\xi, \zeta) \,\bydef \,  |\,\xi|^p -  M ^p |\,\zeta|^p \;,\;\; M \geqslant S_p\;,\; (\xi, \zeta) \in \mathbb C \times \mathbb C
\end{equation}
is quasiconcave at the origin. On the other hand, when $\,p \neq \,2\,$,   $\,\mathbf F_p^M\,$ is not quasiconcave (even for $\, M > 0\,$). In fact $\,\mathbf F_p^M\,$ fails to be rank-one-concave. For this, examine the function $\, t \mapsto\,|\,\xi +t|^p -  M ^p |\,\zeta + t |^p \,$  for concavity at $\,t \approx 0\,$. When $\, p > 2\,$  concavity fails if  $\,(\xi, \zeta) \approx (1, 0) \,$ .  When $\, 1<p <2 \,$ concavity fails if  $\,(\xi, \zeta) \approx (0, 1)\,$. \\
It is therefore more realistic to insist that 
\begin{conjecture}\label{Quasiconvexity-at-0}
Burkholder functions are quasiconcave at the zero matrix.
\end{conjecture}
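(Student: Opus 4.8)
Conjecture~\ref{Quasiconvexity-at-0} is open; the strategy below first reduces it to a sharp Jacobian inequality and then attacks that inequality by two complementary routes. Since $\,\mathbf B_p(0)=0\,$, quasiconcavity of $\,\mathbf B_p\,$ at the zero matrix is simply the assertion $\,\iint_\C\mathbf B_p(D\eta)\le0\,$ for every $\,\eta\in\mathscr C^\infty_0(\C)\,$, which by the $\,p\,$-homogeneity of $\,\mathbf B_p\,$ extends to all $\,\eta\in\mathscr W^{1,p}_\circ(\C)\,$. Substituting \eqref{BurkholderReal} and setting aside the trivial borderline $\,p=2\,$ --- where $\,\mathbf B_2=J_\eta\,$ is a null Lagrangian and the integral vanishes --- the claim for $\,p>2\,$ reads as the sharp \emph{Jacobian inequality}
\[
p\iint_\C J_\eta\,|D\eta|^{p-2}\;\le\;(p-2)\iint_\C |D\eta|^p ,
\]
with the range $\,1<p<2\,$ covered by the companion statement obtained on interchanging $\,f_z\,$ and $\,f_{\bar z}\,$, i.e.\ on replacing $\,\mathbf B_p\,$ by its $\,\mp\,$-partner in \eqref{BurkholderHigherDimensional} (concretely, on testing against $\,\overline{\eta}\,$ instead of $\,\eta\,$). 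Because the left-hand side of \eqref{BeBu} dominates the Beurling functional, this estimate is at least as strong as Conjecture~\ref{p-normConjecture}; no soft or purely algebraic argument can be expected, and the plan is to push further the complex-interpolation machinery of \cite{AIPS} that already produced Theorem~\ref{AIPS}.

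\emph{Route A: deformation onto a quasiconformal extremal.} Given $\,\eta\,$ supported in a disc $\,\Omega\,$ --- so that $\,\id+\eta\,$ fixes $\,\partial\Omega\,$ --- one attaches to $\,\eta\,$ a Beltrami coefficient $\,\mu\,$ of sup-norm $\,1\,$, recording the direction in which $\,\eta\,$ ``wants to stretch'', and lets $\,f^\lambda\,$, $\,\lambda\in\DD\,$, be the principal solution of $\,f^\lambda_{\bar z}=\lambda\,\mu\,f^\lambda_z\,$ on $\,\Omega\,$ with $\,f^\lambda=\id\,$ on $\,\partial\Omega\,$. The mechanism behind Theorem~\ref{AIPS} is that a suitably normalised Burkholder energy $\,\lambda\mapsto\mathscr B^p_\Omega[f^\lambda]\,$ is subharmonic in $\,\lambda\,$, so that its maximum is attained on the circle $\,|\lambda|=1\,$, where $\,f^\lambda\,$ carries the extremal quasiconformal distortion and Theorem~\ref{AIPS} becomes available. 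To convert this outline into a proof of the Jacobian inequality one needs two genuinely new ingredients: \textbf{(i)} a version of Theorem~\ref{AIPS} \emph{without} the pointwise hypothesis $\,\mathbf B_p(Df)\ge0\,$, i.e.\ one must show that the region where $\,|f_z|<(p^\ast-1)|f_{\bar z}|\,$ can only lower the energy; and \textbf{(ii)} a transfer principle converting the circle estimate for the family $\,f^\lambda\,$ back into the sought bound for the original test field $\,\eta\,$, which is precisely where the $\,p\,$-homogeneity of $\,\mathbf B_p\,$ and a scaling argument must be exploited. The hard part will be input \textbf{(i)}: removing the sign restriction is tantamount to showing that the worst case for Conjecture~\ref{ConjOnBurkholderFunctionals} is realised by quasiconformal extensions of the identity, a point conjectured but not established in \cite{AIPS}.

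\emph{Route B: a structured majorant.} Alternatively, one may search for a positively $\,p\,$-homogeneous function $\,\mathbf U_p\ge\mathbf B_p\,$ on $\,\R^{2\times2}\,$ of the special form $\,\mathbf U_p(A)=c\,\det A+h(A)\,$, with $\,c\,$ a constant and $\,h\,$ concave; for any such $\,\mathbf U_p\,$, Jensen's inequality for $\,h\,$ together with the null-Lagrangian identity $\,\iint_\C\det D\eta=0\,$ yields $\,\iint_\C\mathbf B_p(D\eta)\le\iint_\C\mathbf U_p(D\eta)\le0\,$, hence the conjecture. Burkholder's computation already exhibits the rank-one concavity of $\,\mathbf B_p\,$ itself; the task is to trade this directional concavity for a genuine concave correction once an appropriate multiple of the determinant has been subtracted --- exactly the step where rank-one concavity and true concavity (hence quasiconcavity) part ways. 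Whichever route one takes, the single decisive obstacle is this classical gap, the very phenomenon behind \v{S}ver\'{a}k's example: one must control the oscillatory cross terms generated when the pointwise concavity of $\,t\mapsto\mathbf B_p(\xi+t\alpha,\zeta+t\beta)\,$ is integrated against an arbitrary compactly supported test field. Since Conjecture~\ref{Quasiconvexity-at-0} already forces $\,S_p=p^\ast-1\,$, a complete argument must, in effect, simultaneously remove the sign restriction from Theorem~\ref{AIPS} and show that every compactly supported test map can be approximated, without raising the Burkholder energy, by quasiconformal extensions of the identity.
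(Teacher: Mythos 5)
The statement you were asked to prove is labelled a \emph{conjecture} in the paper, and the paper offers no proof of it: quasiconcavity of $\,\mathbf B_p\,$ at the zero matrix is precisely the open problem (equivalent, via \eqref{BeBu}, to the sharp bound $\,S_p=p^\ast-1\,$) that the whole survey is organised around. You correctly recognise this, and what you submit is a research programme rather than an argument; judged as such, it is accurate. Your reduction is sound: since $\,\mathbf B_p(0)=0\,$, quasiconcavity at $\,0\,$ is the statement $\,\iint\mathbf B_p(D\eta)\leqslant 0\,$ for test maps $\,\eta\,$, and rewriting via \eqref{BurkholderReal} with $\,p\geqslant 2\,$ (so $\,p^\ast=p\,$, $\,|1-2/p|=(p-2)/p\,$) gives exactly your Jacobian inequality $\,p\iint J_\eta|D\eta|^{p-2}\leqslant(p-2)\iint|D\eta|^p\,$; the passage to $\,1<p<2\,$ by conjugating the test map is also legitimate, since $\,\eta\mapsto\bar\eta\,$ swaps $\,|\eta_z|\,$ and $\,|\eta_{\bar z}|\,$ and is a bijection on the test class. (One small inaccuracy: the extension from $\,\mathscr C^\infty_0\,$ to $\,\mathscr W^{1,p}_\circ\,$ is by density and the growth bound $\,|\mathbf B_p(A)|\leqslant C|A|^p\,$, not ``by homogeneity'' per se.) The two missing ingredients you isolate in Route A --- removing the pointwise sign restriction $\,\mathbf B_p(Df)\geqslant 0\,$ from Theorem \ref{AIPS}, and transferring the estimate from quasiconformal extensions of the identity to arbitrary compactly supported fields --- are exactly the gaps the paper itself acknowledges.

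One substantive warning about your Route B. A majorant of the form $\,\mathbf U_p(A)=c\det A+h(A)\,$ with $\,h\,$ concave is polyconcave, hence rank-one concave. If such a $\,\mathbf U_p\,$ is in addition isotropic and $\,p\,$-homogeneous --- the natural class in which to look, given the structure of $\,\mathbf B_p\,$ --- then the paper's own maximality result, Theorem \ref{Uniqueness}, forces $\,\mathbf U_p\equiv\mathbf B_p\,$, so Route B collapses to the question of whether $\,\mathbf B_p\,$ itself is polyconcave, which it is not for $\,p\neq 2\,$ (the determinant term is $\,2\,$-homogeneous while $\,\mathbf B_p\,$ is $\,p\,$-homogeneous, and the extremality of $\,\mathbf B_p\,$ in the rank-one concave class leaves no room for a concave remainder). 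Any viable version of Route B must therefore abandon isotropy or homogeneity of the majorant, and you should flag this obstruction explicitly. With that caveat, your proposal neither proves the conjecture nor purports to, and it contains no false step; it is a faithful description of why the problem remains open.
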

\noindent  which is still sufficient for Conjecture \ref{p-normConjecture}.
  Further, an affirmative answer  would give us optimal $\,\mathscr L^p\,$-estimates of the gradient of $\,n\,$-dimensional quasiconformal mappings and the associated nonlinear PDEs. Up to now, quasiconcavity at zero for the functional (\ref{BurkholderHigherDimensional}) has been established for $\, \lambda = \lambda_p (n)\, < 1$  sufficiently close to $1$,    with $\,p \geqslant n - \varepsilon\,$ for some small $\,\varepsilon > 0\,$  \cite{Iw5, Iw1}. At this point it is constructive to introduce an additional parameter to Burkholder integrand.
\begin{equation}\label{BurkholderM}
\mathbf B^M_p(\xi,\zeta) \bydef  \big[\,\,|\xi| -  M\, |\zeta|  \,\big ] \cdot \big [\, |\xi| + |\zeta|\,\big ] ^{p-1} \;
\end{equation}
The rank-one-concavity still holds if $\,M\geqslant p^{_\ast}-1\,$. The $\,\mathscr L^p\,$-boundedness of the Beurling transform  $\mathbf S$ implies that if $\,M\,$ is sufficiently large, then $\,\mathscr B^{p,\,M}_\Omega [f]\,$ is quasiconcave at zero. It should, therefore,  come as surprise that
\begin{remark}\label{QuasiconvexityForMlarge} Quasiconcavity of
$\mathscr B^{p,\,M}_\Omega \,$ remains unknown for any $\,M\geqslant p^{_\ast}-1\, .$
\end{remark}

Also note that we have the following point-wise inequality
\begin{equation}
\; |f_z|^p - \;M^p |f_{\bar{z}} | ^p \,\;\leqslant\; p\,\left(\frac{M}{1 + M}\right)^{p-1}  \big[\,|f_z|  -  M\, |f_{\bar z}|\,\big ] \cdot \big [\, |f_z| + |f_{\bar z}|\,\big ] ^{p-1}
\end{equation}
whenever $\,M \geqslant p^{\,\ast} - 1\,$, see Lemma 8.1 in \cite {Iw1}.
\begin{example}  By way of digression,  consider the following rank-one concave function,
\begin{equation}\label{G.AubertProc.Roy.Soc.Edin.106,1987}
  \mathbf A(\xi, \zeta)\,=\, \big[\, |\xi|^2 \,-\,M^2 \,|\zeta|^2 \,\big ] \cdot \big [\, |\xi|^2 + |\zeta|^2\,\big ]\;,\;\;\,M \geqslant 2 + \sqrt{3}\,
\end{equation}
For the original source of this function we refer the reader to \cite{Aubert}.
The lower bound  $\,M \geqslant 2 + \sqrt{3}\,$ is the best possible for the rank-one concavity of $\,\mathbf A(\xi, \zeta)\,$. It is not difficult to see that for every $\, M \geqslant 1\,$, there is a unique constant $\, c > 0\,$ such that
\begin{equation}\label{Quasiburkholder}
 |f_{z} | ^4 \,-\,M^4 \, |f_{\bar{z}}|^4\;\leqslant  c\, \big[\,  |f_{ z}|^2 \,-\,M^2\,|f_{\bar{z}}|^2 \,\big ] \cdot \big [\, |f_z|^2 + |f_{\bar z}|^2\,\big ]
\end{equation}
Actually, given the factor $\,M^2\,$ in the right hand side,  the inequality (\ref{Quasiburkholder}) forces  $\,c\,$ to be equal to $\,\frac{2\,M^2}{1+ M^2}\,$.
Never mind, even in the best scenario (conjectural quasiconcavity for  $\, M = 2+\sqrt{3} > 3 \,$), the approach by using $\,\mathbf A(\xi\,,\zeta)\,$  would not result in the exact value of the $\,\mathscr L^{\,4}\,$-norm of the Beurling transform. Thus there is no prospect of gaining any good $\,\mathscr L^{\,4}\,$-estimates through the  rank-one concavity of $\,\mathbf A(\xi, \zeta)\,$ and the inequality (\ref{Quasiburkholder}). For more examples of rank-one functions we refer the reader to \cite{Aubert, Sv2}.
\end{example}

\section{rank-one concave envelopes}

\begin{definition} Given a continuous function  $\,
\mathbf E : \mathbb R^{\,m\times n} \rightarrow \mathbb R\,$, we use a visual notation to define:
\begin{itemize}
\item \textit{Rank-one concave envelope} of  $\,\mathbf E\,$ (\textit{the smallest majorant}) as,
    $$
    \mathbf E_R^{\smallfrown}  =  \; \inf\{\,\Xi \,; \;\, \Xi :\, R^{\,m\times n} \rightarrow \mathbb R\,\textnormal{ is rank-one concave, and}\;\Xi \geqslant  \mathbf E  \}
    $$
    \item \textit{Quasiconcave envelope} of  $\,\mathbf E\,$ as,
    $$
    \mathbf E^{\smallfrown}_{Q}\, =\; \inf\{\,\Xi \,; \;\, \Xi :\, R^{\,m\times n} \rightarrow \mathbb R\,\textnormal{ is quasiconcave, and}\;\Xi \geqslant  \mathbf E  \}
    $$
\end{itemize}
\end{definition}

Obviously $\,\mathbf E^{\smallfrown}_{Q} \geqslant \mathbf E_R^{\smallfrown}\,$ pointwise; the former function being quasiconcave   and the latter rank-one concave.
\begin{theorem}
 Recall the Beurling function $\,\mathbf F_{\!p} : \mathbb C\times \mathbb C  \rightarrow \mathbb R\,$
$$
\mathbf F_{\!p} \,(\xi, \zeta) \,\bydef \,  |\,\xi |^p -  \left(p^{\,\ast} - 1 \right) ^p |\,\zeta|^p \;,\;\; \;\;\;\;1 < p < \infty\,.
$$
and the Burkholder's function
$$\mathbf B_p\,(\xi, \zeta) \;\bydef\;  \big[\, |\,\xi| -  (p^{_\ast}- 1)\, |\,\zeta|  \,\big]\cdot \big[\, |\,\xi|\, + \,|\,\zeta|\,\big ] ^ {p-1}\,
$$
The rank-one concave envelope of $\mathbf F_p$ is given by the following formula. For $p \geqslant 2$, 

\begin{displaymath}
 \mathbf F^{\smallfrown}_p\, (\xi, \zeta) \; = \; \left\{\begin{array}{ll}
 |\,\xi |^p  - (p^{\,\ast} - 1) ^p |\zeta|^p \; \;=\; \mathbf F_{\!p}\,(\xi, \zeta)\;\; & \textrm{if $\;\;\;\left( p^{\,\ast} - 1 \right) |\zeta| \;\geqslant \; |\xi |      \,$}  \\
    p\left( 1 - 1/p^{\,\ast}\right )^{p-1}\,\mathbf B_p & \textrm{if $\;\;\;\left( p^{\,\ast} - 1 \right) |\zeta| \;\leqslant \; |\xi |      \,$}
\end{array} \right.
\end{displaymath}
While, for $1<p<2$, 
\begin{displaymath}
 \mathbf F^{\smallfrown}_p\, (\xi, \zeta) \; = \; \left\{\begin{array}{ll}
 p\left( 1 - 1/p^{\,\ast}\right )^{p-1}\,\mathbf B_p  & \textrm{if $\;\;\;\left( p^{\,\ast} - 1 \right) |\zeta| \;\geqslant \; |\xi |      \,$}  \\
   \mathbf F_{\!p}\,(\xi, \zeta)\;\;  & \textrm{if $\;\;\;\left( p^{\,\ast} - 1 \right) |\zeta| \;\leqslant \; |\xi |      \,$}
\end{array} \right.
\end{displaymath}
\end{theorem}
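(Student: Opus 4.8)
The plan is to prove the two estimates $\mathbf F_p^{\smallfrown}\le\mathbf G$ and $\mathbf F_p^{\smallfrown}\ge\mathbf G$, where $\mathbf G$ denotes the function on the right-hand side. By the interchange of $|f_z|$ and $|f_{\bar z}|$ noted after \eqref{BurkholderHigherDimensional} (which leaves rank-one concavity intact), the case $1<p<2$ follows from the case $p>2$ after swapping $\xi$ and $\zeta$, so I would fix $p\ge 2$. Then $p^*-1=p-1$, $C_p:=p(1-1/p^*)^{p-1}=p^{\,2-p}(p-1)^{p-1}$, and $\mathbf G$ equals $\mathbf F_p$ on $\mathcal A:=\{(p-1)|\zeta|\ge|\xi|\}$ and $C_p\mathbf B_p$ on $\mathcal B:=\{(p-1)|\zeta|\le|\xi|\}$, the two formulas agreeing (both equal to $0$) on the interface $(p-1)|\zeta|=|\xi|$, which for $p>2$ meets each rank-one line in at most two points.

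For the first estimate it suffices to show $\mathbf G$ is a rank-one concave majorant of $\mathbf F_p$. Majorisation is trivial on $\mathcal A$; on $\mathcal B$ one has $\mathbf B_p\ge0$ and the elementary pointwise bound \eqref{BeBu} yields $\mathbf F_p\le C_p\mathbf B_p=\mathbf G$ (also using $C_p\ge1$ for $p\ge2$, a one-variable check). Rank-one concavity of $\mathbf G$: the function $C_p\mathbf B_p$ is rank-one concave throughout $\mathbb C\times\mathbb C$ by Burkholder's concavity computation; $\mathbf F_p$ is rank-one concave on $\mathcal A$ — this comes from differentiating $t\mapsto|\xi+t\alpha|^p-(p-1)^p|\zeta+t\beta|^p$ twice for $|\alpha|=|\beta|$ and observing that, as long as the rank-one segment in question stays in $\mathcal A$, the contribution of the concave term dominates; and at an interface point $(\xi_0,\zeta_0)$, writing $a(t)=|\xi_0+t\alpha|$, $b(t)=|\zeta_0+t\beta|$ for a rank-one direction $(\alpha,\beta)$, a direct computation gives that the one-sided $t$-derivatives of $\mathbf F_p$ and of $C_p\mathbf B_p$ both equal $p(p-1)^{p-1}|\zeta_0|^{\,p-1}\bigl(a'(0)-(p-1)b'(0)\bigr)$, the matching being precisely the identity $C_p\,p^{\,p-1}=p(p-1)^{p-1}$. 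A $C^1$-gluing of two rank-one concave functions across such a hypersurface is again rank-one concave, so $\mathbf G$ is rank-one concave and $\mathbf F_p^{\smallfrown}\le\mathbf G$; combined with $\mathbf F_p^{\smallfrown}\ge\mathbf F_p$ this already gives $\mathbf F_p^{\smallfrown}=\mathbf F_p=\mathbf G$ on all of $\mathcal A$.

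For the reverse estimate it remains to prove $\mathbf F_p^{\smallfrown}\ge C_p\mathbf B_p$ on $\mathcal B$. I would invoke the laminate description of the rank-one concave envelope, $\mathbf F_p^{\smallfrown}(A)=\sup\{\int\mathbf F_p\,d\nu:\nu\text{ a laminate with barycenter }A\}$, and construct, for each $A_0\in\mathcal B$, laminates $\nu_k$ with barycenter $A_0$ along which $\int\mathbf F_p\,d\nu_k\to C_p\mathbf B_p(A_0)$. The ingredients are: (i) in the interior of $\mathcal B$ where $|\xi|/|\zeta|$ exceeds $(p-1)^{(p-1)/(p-2)}$, $\mathbf F_p$ is rank-one convex, so that splittings reaching that region gain value; and (ii) Burkholder's concavity is non-strict along the anti-diagonal directions $(\alpha,\beta)=(\xi_0/|\xi_0|,-\zeta_0/|\zeta_0|)$, along which $a+b$ stays constant and $\mathbf B_p$ — hence $C_p\mathbf B_p$, and hence the value of an optimal laminate — is affine. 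One chains an anti-diagonal splitting at $A_0$ with further laminations that carry a controlled amount of mass into the rank-one-convex part of $\mathcal B$ and back. Equivalently, one argues directly that any rank-one concave $\Xi\ge\mathbf F_p$ (which, by the first part, already equals $\mathbf F_p$ on $\mathcal A$) is driven by these iterated laminations up to $C_p\mathbf B_p$ on $\mathcal B$.

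The upper bound is essentially bookkeeping — Burkholder's theorem, the inequality \eqref{BeBu}, and the $C^1$-matching. The genuine obstacle is the lower bound on $\mathcal B$, because the passage from $\mathbf F_p$ to $C_p\mathbf B_p$ is strictly non-local: on the whole slab $\{(p-1)|\zeta|\le|\xi|\le(p-1)^{(p-1)/(p-2)}|\zeta|\}$ the function $\mathbf F_p$ is rank-one concave at every point, so no single rank-one chord through such a point detects any increase, and the "separately concave along anti-diagonals'' relaxation returns $\mathbf F_p$ there as well; the increase must be transported from the rank-one-convex region deep in $\mathcal B$ through a genuinely higher-order microstructure, and writing that microstructure down explicitly is the hard part. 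This is exactly the circle of questions around the extremal Beltrami-type families used in Theorem \ref{AIPS}; alternatively, one might try to exhibit $\mathbf G$ as the value function of a Bellman problem adapted to $\mathbf F_p$, which would deliver its rank-one concavity and its minimality among majorants in one stroke.
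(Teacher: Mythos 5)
Your first half --- that the candidate function $\mathbf G$ is a rank-one concave majorant of $\mathbf F_p$, hence an upper bound for $\mathbf F_p^{\smallfrown}$ --- is correct and in fact more detailed than what the paper writes (the paper simply asserts that $\mathbf E$ is rank-one concave, deferring to Burkholder's concavity computation and the earlier remarks). Your pointwise verification of rank-one concavity of $\mathbf F_p$ on $\mathcal A$, the first-order matching identity $C_p\,p^{\,p-1}=p(p-1)^{p-1}$ across the interface, and the observation that each rank-one line meets the interface in at most two points (so that the line splits into a middle $\mathcal B$-segment and two outer $\mathcal A$-rays, on each of which the relevant piece is concave) do assemble into a sound proof of $\mathbf F_p^{\smallfrown}\leqslant\mathbf G$. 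One small slip: the region where $\mathbf F_p$ is genuinely rank-one \emph{convex} is $|\xi|\geqslant(p-1)^{(p+1)/(p-2)}|\zeta|$, not $|\xi|\geqslant(p-1)^{(p-1)/(p-2)}|\zeta|$; the latter threshold is only where rank-one concavity first fails in some direction. This does not affect the upper bound.

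The genuine gap is the lower bound $\mathbf F_p^{\smallfrown}\geqslant C_p\mathbf B_p$ on $\mathcal B$, which you correctly identify as the hard part but do not prove: the laminate construction is only gestured at ("writing that microstructure down explicitly is the hard part"), and as you yourself note, no finite-order lamination visible from the slab $\{(p-1)|\zeta|\leqslant|\xi|\leqslant(p-1)^{(p-1)/(p-2)}|\zeta|\}$ detects the increase, so the sketch cannot be completed as stated without a substantially new idea. The paper closes exactly this gap by an import rather than a construction: it restricts $\mathbf F_p^{\smallfrown}$ to the real two-planes $(x,y)\mapsto(e^{i\theta_1}x,e^{i\theta_2}y)$, observes that rank-one concavity of $\mathbf F_p^{\smallfrown}$ forces each restriction to be zig-zag concave (concave in the $\pm\pi/4$ directions of $\mathbb R^2$), and then invokes Theorems 6 and 7 of Vasyunin--Volberg \cite{VV}, which compute the zig-zag concave envelope of $|x|^p-(p^*-1)^p|y|^p$ by the Bellman-function/Monge--Amp\`ere method and show it equals $\mathbf E(|x|,|y|)$. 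Since this holds for every pair $(\theta_1,\theta_2)$, the pointwise bound $\mathbf F_p^{\smallfrown}\geqslant\mathbf E$ follows. Your closing suggestion --- to realize $\mathbf G$ as the value function of a Bellman problem --- is precisely the route taken in \cite{VV}, but in your write-up it remains a suggestion rather than an argument, so the proposal as it stands proves only one of the two required inequalities.
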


Burkholder \cite{Bu1} shows this in a slightly different sense. Namely, that the envelope function above is the smallest majorant of $\mathbf F_p$ which is concave in orientation-reversing directions (as discussed on page 5). See also, p. 64 in  \cite{Ba1}. The result as stated here basically follows from the work \cite{VV}.

\begin{proof}
Let us denote by $\mathbf{E}(\xi,\zeta)$ the formula given above. Our task is to show that $\mathbf F^{\smallfrown}_p=\mathbf{E}$. For any pair $\theta_1,\theta_2 \in [0,\pi)$, consider the function $\mathbf F^{\smallfrown}_{p,\theta_1,\theta_2} \colon \mathbb{R} \times \mathbb{R} \to \mathbb{R}$,
\[ (x,y) \mapsto \mathbf F^{\smallfrown}_p(e^{i\theta_1}x,e^{i \theta_2}y).
\]
Using rank-one concavity of $\mathbf F^{\smallfrown}_p$ we see that $\mathbf F^{\smallfrown}_{p,\theta_1,\theta_2}$ is zig-zag concave, that is, concave in the directions of $\pm\, \pi/4$ in $\mathbb{R}^2$.
By the results (Theorem 6 and 7) of \cite{VV} on the zig-zag concave envelope of $|x|^p-(p^*-1)^p |y|^p$, we have that $\mathbf F^{\smallfrown}_{p,\theta_1,\theta_2}(x,y) \geqslant \mathbf{E}(|x|,|y|)$. Since, this is true for any $\theta_1,\theta_2 \in [0,\pi)$ we have the inequality $\mathbf F^{\smallfrown}_p(\xi,\zeta) \geqslant \mathbf{E}(|\xi|,|\zeta|)=\mathbf{E}(\xi,\zeta)$. On the other hand, as we have remarked $\mathbf{E}$ is rank-one concave so $\mathbf F^{\smallfrown}_p=\mathbf{E}$ as claimed.
\end{proof}

\section{Radially linear transformations} It is advantageous to dispose with a fairly large class of mappings that can be effectively applied to all rank one-concave functionals when computing the energy. One of such classes  is the following:\\
Suppose we are given a Lipschitz function $\, \Lambda : [0, R] \rightarrow \mathbb R^{n\times n}\,$. Define a mapping $\, f \,: \mathbb B_R \rightarrow \mathbb R^n\,,\; \mathbb B_r = \{ x \,; \, |x| \leqslant r\,\}\,$,  by the rule
\begin{equation}\label{TwistedRadialMap}
 f(x) = \Lambda(|x|) \,x
\end{equation}
Thus $\,f\,$ restricted to any sphere $\,\mathbb S_r = \{ x ; \; |x| = r\,\}\,,\, 0 < r \leqslant R\,$, is a linear transformation. 
Proceeding further in this direction one could obtain more mappings of interest, but for us the class of mappings defined by (\ref{TwistedRadialMap}) will work perfectly well. For radial maps where $\, \Lambda : [0, R] \rightarrow \mathbb R$, the following proposition is shown e.g. in \cite[Proposition 3.4]{Ball90}.

\begin{proposition} \label{EnergyRdialLinear} Let $\,\textnormal E : \mathbb R^{n\times n} \rightarrow \mathbb R\,$ be continuous and rank-one concave. Then for $\,f(x) = \Lambda(|x|) \,x \,$ as in (\ref{TwistedRadialMap}), we have
$$
\mathscr E [f] \,\bydef \, \int_{\mathbb B_R} \textnormal E( Df ) \leqslant \int_{\mathbb B_R} \textnormal E( \Lambda(R)) \,=\, \mathscr E[f^R]\;,\;\textnormal{where}\;\; f^R(x)  \bydef \Lambda(R)\,x
$$
\end{proposition}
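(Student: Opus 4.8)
The plan is to reduce the multidimensional statement to a one-parameter (radial) integration in $r$ and then apply rank-one concavity fiber-by-fiber together with an averaging argument on each sphere. First I would compute $Df(x)$ for $f(x)=\Lambda(|x|)\,x$: writing $r=|x|$ and $\omega = x/|x|$, one gets
\[
Df(x) \;=\; \Lambda(r) \;+\; \big(\Lambda'(r)\,x\big)\otimes \omega,
\]
so that on the sphere $\mathbb S_r$ the derivative is a rank-one perturbation of the constant matrix $\Lambda(r)$, the perturbation direction $\omega\otimes\omega$ (really $v\otimes\omega$ with $v=\Lambda'(r)x$) being a rank-one matrix. This is the structural fact that makes rank-one concavity the right hypothesis.

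Next I would slice the integral in polar coordinates,
\[
\mathscr E[f] \;=\; \int_0^R \!\! \int_{\mathbb S_r} \textnormal E\big(\Lambda(r) + (\Lambda'(r)x)\otimes\omega\big)\; d\sigma(x)\; dr .
\]
The key step is to bound the inner spherical average. Fix $r$ and think of the sphere as generated by rotations: for $Q\in SO(n)$, the point $Qx$ has outward normal $Q\omega$, and since $f$ is defined so that its restriction to $\mathbb S_r$ is the \emph{linear} map $\Lambda(r)$, the matrices $\Lambda(r)+(\Lambda'(r)Qx)\otimes Q\omega$ are all rank-one equivalent to $\Lambda(r)$ along a common rank-one line once we pass to a suitable frame. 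Concretely, I would show that averaging over the sphere (or over the relevant $SO(n)$ orbit) the matrix $Df$ has mean value $\Lambda(r)$ modulo directions that are genuine rank-one increments, so that rank-one concavity (in the one-variable form: $t\mapsto \textnormal E(A+tX)$ concave for $\operatorname{rank}X=1$, iterated via Jensen) yields
\[
\fint_{\mathbb S_r} \textnormal E\big(Df\big)\,d\sigma \;\leqslant\; \textnormal E\big(\Lambda(r)\big).
\]
Then $\mathscr E[f]\le \int_0^R |\mathbb S_r|\,\textnormal E(\Lambda(r))\,dr$.

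Finally I would run the same comparison \emph{in $r$}: the function $r\mapsto \textnormal E(\Lambda(r))$ need not be monotone, but I can compare $f$ with $f^R(x)=\Lambda(R)x$ by a second, radial, rank-one concavity argument — indeed the one-dimensional case is exactly Proposition 3.4 of \cite{Ball90} cited above, applied to the $1$-parameter family of constant matrices $\Lambda(r)$ on nested balls, which gives $\int_{\mathbb B_R}\textnormal E(\Lambda(|x|))\le \int_{\mathbb B_R}\textnormal E(\Lambda(R))$. Chaining the two estimates gives $\mathscr E[f]\le \mathscr E[f^R]$.

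The main obstacle is the spherical averaging step: making precise the claim that integrating $Df$ over $\mathbb S_r$ only adds rank-one fluctuations to $\Lambda(r)$, uniformly enough to invoke Jensen's inequality for a merely rank-one concave (not convex, not separately convex in all variables) integrand. One clean way is to discretize: approximate the spherical average by a finite convex combination supported on a single great circle, on which the normals $\omega$ sweep out a $1$-parameter family and successive increments of $Df$ are rank-one; then a telescoping/iterated application of one-variable concavity handles the finite combination, and a limiting argument (using continuity of $\textnormal E$ and dominated convergence) passes to the sphere. Verifying that the requisite increments really do have rank one — i.e. that the geometry of $f$ being linear on each $\mathbb S_r$ forces this — is where the definition \eqref{TwistedRadialMap} does the essential work, and is the point I would spell out most carefully.
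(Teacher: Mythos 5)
Your structural starting point is right and matches the paper's: $Df(x)=\Lambda(r)+\Lambda'(r)\,\frac{x\otimes x}{r}$ is a rank-one perturbation of $\Lambda(r)$ on each sphere. But the two intermediate inequalities into which you split the result are each false in general, and only their combination is true. First, the spherical mean of $Df$ over $\mathbb S_r$ is not $\Lambda(r)$: since $\frac{1}{|\mathbb S_r|}\int_{\mathbb S_r} x\otimes x = \frac{r^2}{n}I$, it equals $\Lambda(r)+\frac{r}{n}\Lambda'(r)$. Testing with an affine integrand $\textnormal E$ (which is rank-one concave) already kills the claimed bound $\frac{1}{|\mathbb S_r|}\int_{\mathbb S_r}\textnormal E(Df)\leqslant \textnormal E(\Lambda(r))$. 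Moreover, a Jensen inequality over a whole sphere's worth of rank-one directions is not a consequence of rank-one concavity --- that is essentially what quasiconcavity would assert, i.e.\ the very thing this circle of problems is about --- and your proposed repair via a great circle does not work either, because the difference of $Df$ at two distinct points of $\mathbb S_r$ is $\Lambda'(r)\bigl(\frac{x_1\otimes x_1}{r}-\frac{x_2\otimes x_2}{r}\bigr)$, which is generically of rank two, so successive increments are not rank-one and no laminate/telescoping argument applies. Second, the radial comparison $\int_{\mathbb B_R}\textnormal E(\Lambda(|x|))\leqslant \int_{\mathbb B_R}\textnormal E(\Lambda(R))$ is likewise false for affine $\textnormal E$ with $\Lambda$ large near the origin; and Ball's proposition cannot be invoked here, since for nonconstant matrix-valued $\Lambda$ the field $x\mapsto\Lambda(|x|)$ is not the gradient of any map, so there is no admissible deformation to which a variational statement could apply.

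The paper's proof avoids both pitfalls by using the \emph{first-order} (tangent-plane) form of concavity rather than a zeroth-order Jensen step: after mollifying $\textnormal E$ to justify differentiation, rank-one concavity gives
\begin{equation*}
\textnormal E(Df(x))\;\leqslant\;\textnormal E(\Lambda(r))\;+\;\Bigl\langle \textnormal E'(\Lambda(r))\,\Big|\;\Lambda'(r)\,\tfrac{x\otimes x}{r}\Bigr\rangle ,
\end{equation*}
and integrating over $|x|=t$ using $\int_{|x|=t}x\otimes x=\frac{\omega_{n-1}}{n}t^{n+1}I$ turns the linear correction into exactly $\frac{\omega_{n-1}}{n}t^{n}\langle \textnormal E'(\Lambda(t))\,|\,\Lambda'(t)\rangle$, i.e.\ the missing piece of $\frac{d}{dt}\bigl(\frac{\omega_{n-1}}{n}t^{n}\textnormal E(\Lambda(t))\bigr)$. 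Hence $t\mapsto\int_{|x|\leqslant t}\textnormal E(Df)-\frac{\omega_{n-1}}{n}t^{n}\textnormal E(\Lambda(t))$ has nonpositive derivative a.e.\ and vanishes at $t=0$, which is the proposition. In short, the two ``errors'' in your decomposition are equal and opposite, but they cancel only when kept together as this total derivative in $t$; separated as you propose, neither half is provable.
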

\begin{proof}
 A standard mollification procedure, through convolution of $\,\textnormal E\,$ with an approximation of the Dirac mass,
$$
\textnormal E_\varepsilon (X)\, \bydef \,(\textnormal E \ast \Phi_\varepsilon) (X) \; = \int_{\mathbb R^{n\times n}}  \Phi_\varepsilon(Y) \, \textnormal E(X - Y ) \,\textnormal{d} Y\;,
$$
results in $\,\mathscr C^{\infty}\,$-smooth functions which are still rank-one concave. As $\,\varepsilon\,$ approaches  0 the mollified functions $\,\textnormal E_\varepsilon\,$  converge to $\,\textnormal E\,$  uniformly on compact subsets of $\,n\times n\,$- matrices. Therefore, there is no loss of generality in assuming that $\,\textnormal E \in \mathscr C^\infty( \mathbb R^{n\times n} \,, \mathbb R)\,$. With this assumption
consider the linear mappings $\,f_t = \Lambda(t)\,x\,$, for $\,0 \leqslant t \leqslant R\,$. We aim to show that the difference of energies:
\begin{eqnarray}\nonumber
 \mathscr E(t) \;&\bydef &\, \int_{|x| \leqslant \,t} \textnormal E(Df(x))\,\textnormal d x \;-\; \int_{|x| \leqslant \,t} \textnormal E(Df_t(x))\,\textnormal d x\;\\ & = &\int_{|x| \leqslant \,t} \textnormal E(Df(x))\,\textnormal d x \;-\;\frac{\omega_{n-1}}{n}\,t^n \textnormal E\left(\Lambda(t) \right)\nonumber
\end{eqnarray}
 is nondecreasing in $\,t\,$. Thus we compute its derivative for $\,t\geqslant 0\,$. The computation is legitimate at almost every $\,t \in [0 , R]\,$,
$$
\mathscr E\,'(t)\,=\int_{|x| = \,t} \textnormal E(Df(x))\,\textnormal d x \;-\; \omega_{n-1}\,t^{n-1} \textnormal E\left(\Lambda(t) \right) \;-\;
\frac{\omega_{n-1}}{n}\,t^n \big \langle\textnormal E '\left(\Lambda(t)\right) \,| \,\Lambda ' (t) \big\rangle
$$

Next we find that $\,Df(x) = \Lambda (|x| ) \, + \Lambda ' (|x| )\, \frac{x \otimes x}{|x|}\,$, where the tensor product of vectors represents a rank-one matrix. By virtue of rank-one concavity of $\,\textnormal E\,$ it follows that
$$\,\textnormal E(Df(x)) \leqslant \, \textnormal E \left (\Lambda(|x|) \right )\; +\; {\Big \langle} \textnormal E'\big (\Lambda(|x|)\big )\, \Big|\; \Lambda ' (|x| )\, \frac{x \otimes x}{|x|}\, \Big \rangle\,.$$
We then integrate over the sphere $\, |x| = t\,$, to obtain

\begin{eqnarray}\nonumber
 \int_{|x| = \,t} \textnormal E(Df(x))\,\textnormal d x \;&\leqslant &  \; \omega_{n-1}\,t^{n-1} \textnormal E\left(\Lambda(t) \right) \; + \; {\Big \langle} \textnormal E'\big (\Lambda(t)\big )\, \Big|\; \frac{\Lambda ' (t)}{t} \int_{|x| = t} x \otimes x\, \Big \rangle\,,\\ &\textnormal{where} & \int_{|x| = t} x \otimes x\, = \frac{1}{n} \int_{|x| = t} |x|^2  \, I = \,\frac{\omega_{n-1}}{n} \,t^{n+1} \, I\nonumber
\end{eqnarray}
In conclusion, $\,\mathscr E\, '(t) \leqslant 0\,$ almost everywhere. Hence $\,\int_{\mathbb B_R} \textnormal E( Df ) \leqslant \int_{\mathbb B_R} \textnormal E( \Lambda(R))\,$, as desired.

\section{Burkholder's energy of radial stretchings}\label{EnergyRadial} Of particular interest are mappings, subject to the given boundary data,  at which the Burkholder energy  assumes the maximum value. For this we look at the radial stretchings as in \cite{AIPS, BM}. Our notations, however,  are little different. Let
\begin{equation}\label{RadialStretchings}
f_{_+}(z) = \rho(|z|) \frac{z}{|z|}\;\;\;\textnormal{and}\;\;\;\; f_{_-}(z)\, \bydef \,\rho(|z|) \,\frac{\bar{z}}{|z|}\;.
\end{equation}
Here the continuous function $\,\rho: [0 , R ] \rightarrow [0, \infty)\,$ is assumed to be locally Lipschitz in $(0,R]$  and  satisfy $\,\rho(0) = 0\,$. However, we do not require that $\rho$ is increasing, in particular $f_\pm$ needs not to be a homeomorphism. In our situation $\, f_{_+}\,$ and $\,f_{_-}\,$ have well defined complex derivatives for almost every $\,z \in \mathbb D_R = \{ z ;\; |z| \leqslant R\,\}\,$,
$$
 \frac{\partial f_{_+}}{\partial z} (z) = \frac{1}{2}\left[\frac{\rho(|z|)}{|z|} \,+\, \rho\,'(|z|) \right]\;,\;\;\; \frac{\partial f_{_+}}{\partial \bar{z}} (z) \,= \frac{1}{2}\left[\rho\,'(|z|) - \frac{\rho(|z|)}{|z|} \right] \frac{z}{\bar{z}}
$$
$$
\frac{\partial {f_{_-}}}{\partial \bar{z}} (z) = \frac{1}{2}\left[\frac{\rho(|z|)}{|z|} \,+\, \rho\,'(|z|) \right]\;,\;\;\; \frac{\partial {f_{_-}}}{\partial z} (z) \,= \frac{1}{2}\left[\rho\,'(|z|) - \frac{\rho(|z|)}{|z|} \right] \frac{\bar{z}}{z}
$$
In addition to $\,\rho\,$ being Lipschitz, we wish that $\, |Df| = |f_z| + |f_{\bar{z}}|\, $ be free from the derivative of $\,\rho\,$. This is equivalent to requiring that
\begin{equation}\label{RhoCondition1}
- \rho(r) \leqslant r \,\rho\,' (r)\;\leqslant \rho(r)\;,\;\;\;\;\textnormal{for almost every }\;\; r \bydef |z| \leqslant R
\end{equation}
Finally, in case $p>2$ we also assume that
\begin{equation}\label{RhoCondition2}
\lim_{r\to 0^+}r^{-1+2/p}\rho (r)=0.
\end{equation}
Thus
$$
 \left |\frac{\partial f_{_\pm}}{\partial z} (z)\right | = \frac{1}{2}\left(\frac{\rho(|z|)}{|z|} \,\pm\, \rho\,'(|z|) \right)\;\;\textnormal{and}\;\;\; \left |\frac{\partial f_{_\pm}}{\partial \bar{z}} (z)\right | \,= \frac{1}{2}\left( \frac{\rho(|z|)}{|z|} \mp \rho\,'(|z|) \right)
$$
In either case $\, |Df(z)| = \rho(|z|)/|z|\,$.  The $\,\mathbf B_p\,$-energy of $\, f\,$ can then be computed; we take for $\,f\,$ the radial stretching $\,f_{_+}\,$ if $\,2 \leqslant p < \infty,$ and $\,f_{_-}\,$ if $\,1 <p  \leqslant 2 \,$.
\begin{eqnarray}
&&\mathscr B_p \,[f_{_\pm}] \;\bydef \int_{|z|\leqslant R} \mathbf B_p(f_z, f_{\bar{z}})\;=\;\textnormal{$ \hskip9cm$}\nonumber\\ &=& \;\int_{|z|\leqslant R} \big[\,|f_z|\, - \, (p^{_\ast}-1) |f_{\bar z}|  \,\big ] \cdot \big [\, |f_z| + |f_{\bar z}|\,\big ] ^{p-1} \, \textnormal d z \;\nonumber\\&=&
\frac{1}{2}\int_{|z|\leqslant R} \left[\, \Big(2 - p^{\ast}\Big) \frac{\rho(|z|)}{|z|} \,\pm\,p^{\ast} \,\rho\,'(|z|) \,\right] \cdot \left [\,\frac{\rho(|z|)}{|z|}\,\right ] ^{p-1} \, \textnormal d z \nonumber\\&=&
 \pi \int_0^R \big [(2 - p^{\ast})\, r^{1-p} \rho^p \;\pm\; p^{\ast} \,r^{2-p} \rho^{p-1} \rho\,' \big ] \textnormal d r\nonumber\\&=&
\pm \frac{\pi p^{\ast}}{p} \int_0^R \frac{\textnormal{d}}{\textnormal{d}r} \left( r^{2-p} \rho ^p\right )\;\textnormal{d} r \;=\; \pm \frac{\pi p^{\ast}}{p} R^{2-p} \big[\rho(R) \big]^p\;= \mathscr B_p \,[f^R_{_\pm}]. \nonumber
\end{eqnarray}

This is none other than the $\,\mathbf B_p\,$-energy of the linear extension of the boundary map $\,f_{_\pm} : \mathbb S_R \rightarrow \mathbb C\,$; that is, $\,f^R_+(z) = \frac{\rho(R)}{R}\,z\; (2\leqslant p < \infty)\,$ and  $\,f^R_-(z) = \frac{\rho(R)}{R}\,\bar{z}\; (1 < p \leqslant 2 )\,$.
\end{proof}

\section{
Burkholder function is an extreme point} \label{ExtremePoint}

Let $\,\mathscr V\,$ be a real vector space and $\,\mathcal F \subset \mathscr V\,$ a convex subset. An extreme point of $\mathcal F\,$ is an element $\,F \in \mathcal F\,$ which does not lie in any open segment joining two elements of $\,\mathcal F\,$.\\
We shall consider the vector space $\,\mathscr V = \mathscr V_p \,$ of continuous functions $\,\mathbf E : \mathbb C \times \mathbb C \rightarrow \mathbb R\,$ which are isotropic and homogeneous of degree $\, 1 < p < \infty \,$. Precisely,
\begin{itemize}
\item we assume that$\,\mathbf E(\xi, \zeta)= \Phi(|\xi|\,, |\zeta| )\,$ for some locally Lipschitz function $\, \Phi : [0, \infty) \times [0, \infty) \rightarrow \mathbb R\,$ , and
\item
$
 \mathbf E(t \xi, \,t\zeta)\; =\; t^p \,\mathbf E(\xi, \zeta)\;,\;\; \textnormal{for}\; t \geqslant 0\,\,\;\textnormal{and}\; \;\xi, \zeta \in \mathbb C\;.
$
\end{itemize}
 Recall that $\,\mathbf E \in \mathscr V\,$ is rank-one convex (concave) if for every $\,\xi, \zeta \in \mathbb C\,$ and $\, \xi_\circ , \zeta_\circ \in \mathbb S^1\,$ \, \,the real variable function  $\, t \mapsto \mathbf E(\xi +\,t\xi_\circ , \; \zeta + \,t\zeta_\circ ) \, $ is convex (concave, respectively).

\begin{definition} We let $\,\mathscr V_p^{\,\smallsmile}   \subset \mathscr V_p\,$  and $\,\mathscr V_p ^{\,\smallfrown}  \subset \mathscr V_p\,$ denote the families of rank-one convex and rank-one concave functions, respectively.
\end{definition}

Both families $\,\mathscr V_p^{\,\smallsmile}\,$ and $\,\mathscr V_p ^{\smallfrown}\,$ are convex subsets of $\,\mathscr V_p\,$.


Before proceeding to the extreme points we need to look at a slightly more general context. Suppose we are given a decomposition of the Burkholder function $\,\mathbf B_p = \mathbf B^\smallfrown_p \in \mathscr V_p{\,^\smallfrown} \,$ (and similarly $\,-\mathbf B_p\in \mathscr V_p^{\,\smallsmile}\,$).

\begin{equation}\label{Decomposition1}
\mathbf B_p (\xi ,\zeta) =\sum_{1\leqslant i \leqslant n}   \lambda_{\,i}\,\mathbf E_{\,i}(\xi , \zeta) \;,\;\; \lambda_{\,i} > 0\,,\;\;\textnormal{where $\,\mathbf E_{\,i} \in \mathscr V_p ^\smallfrown\,$.}
\end{equation}
One possibility is that there exist positive numbers $\,\theta _i > 0\,$ such that
\begin{equation}\label{DecompositionOutcome1}
\mathbf E _i \;\equiv\; \theta _i \,\mathbf B_p\;,\;\;\textnormal{for all} \; i = 1, 2, ... , n\;,\;\;\textnormal{and}\;\; \sum_{1\leqslant i \leqslant n}   \lambda_i\,\theta_i  \;= 1 .
\end{equation}
\begin{proposition}\label{ResultOfDecomposition}
 For $\,p \neq 2\,$, a decomposition of Burkholder function $\,\mathbf B_p\,$ as in (\ref{Decomposition1}) forces its components $\,\mathbf E _i\,$ to satisfy (\ref{DecompositionOutcome1}). For $\,p=2\,$, however,  the Burkholder function is a null-Lagrangian (i.e. it is both quasiconcave and quasiconvex), $\,\mathbf B_2 (\xi, \zeta ) = |\zeta|^2 - |\xi |^2\,$. In this case each component $\,\mathbf E_i(\xi,\zeta)\,$ is a real (positive or negative) multiple of $\,\mathbf B_2 (\xi, \zeta )\,$.
\end{proposition}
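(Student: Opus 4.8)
The plan is to combine two ingredients already at our disposal: the profile-independence of the Burkholder energy on radial stretchings computed in Section~\ref{EnergyRadial}, $\mathscr B_p[f_\pm|_{\mathbb D_R}]=\mathscr B_p[f^R_\pm]$ for every admissible profile $\rho$ and radius $R$, and the maximality $\mathscr E[f|_{\mathbb D_R}]\le\mathscr E[f^R]$ of a rank-one concave functional on a radially linear map, Proposition~\ref{EnergyRdialLinear}. Fix $p\neq 2$ and let $f$ be the radial stretching $f_+$ for $p>2$ and $f_-$ for $1<p<2$ (each a radially linear map in the sense of \eqref{TwistedRadialMap}), built from a smooth admissible profile equal to $\rho(r)=r$ near the origin, so that the matrix profile $\Lambda$ is Lipschitz on $[0,R]$ and Proposition~\ref{EnergyRdialLinear} applies with no regularity fuss. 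Writing $\mathscr E_i[g]=\int\mathbf E_i(Dg)$ and integrating \eqref{Decomposition1} over $\mathbb D_R$ for $g=f$ and for $g=f^R$, the equality $\mathscr B_p[f|_{\mathbb D_R}]=\mathscr B_p[f^R]$ yields $\sum_i\lambda_i\bigl(\mathscr E_i[f^R]-\mathscr E_i[f|_{\mathbb D_R}]\bigr)=0$, where each difference is nonnegative by Proposition~\ref{EnergyRdialLinear} and each $\lambda_i>0$; hence $\mathscr E_i[f|_{\mathbb D_R}]=\mathscr E_i[f^R]$ for every $i$, every admissible profile $\rho$ and every $R$.

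Next I would turn this scalar identity into a pointwise law for the $\mathbf E_i$. By isotropy and $p$-homogeneity write $\mathbf E_i(\xi,\zeta)=(|\xi|+|\zeta|)^p\,g_i\!\bigl(\tfrac{|\xi|-|\zeta|}{|\xi|+|\zeta|}\bigr)$ with $g_i\colon[-1,1]\to\mathbb R$ continuous. Along $f_+$ one has $|Df|=\rho(r)/r$ while the ratio equals $r\rho'(r)/\rho(r)\in[-1,1]$ by \eqref{RhoCondition1}, so
\[
\mathscr E_i[f_+|_{\mathbb D_R}]=2\pi\!\int_0^R r^{1-p}\rho(r)^p\,g_i\!\Bigl(\tfrac{r\rho'(r)}{\rho(r)}\Bigr)\,\textnormal d r,\qquad \mathscr E_i[f^R_+]=\pi R^{2-p}\rho(R)^p\,g_i(1).
\]
Differentiating the equality of the two sides in $R$ (legitimate since $\rho$ is smooth), cancelling the common factor $\pi R^{1-p}\rho(R)^{p-1}$ and setting $u=R\rho'(R)/\rho(R)$, one is left with $g_i(u)=\tfrac12\,g_i(1)\,[(2-p)+pu]$. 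As the profile $\rho$ varies, $u$ sweeps out all of $(-1,1)$, so by continuity $g_i(u)=\tfrac12\,g_i(1)\,[(2-p)+pu]$ throughout $[-1,1]$. Comparing with the same representation $g_{\mathbf B_p}(u)=\tfrac12\,[(2-p^*)+p^*u]$ of the Burkholder function — which for $p\ge 2$ is exactly $\tfrac12[(2-p)+pu]$, with $g_{\mathbf B_p}(1)=1$ — we obtain $g_i=g_i(1)\,g_{\mathbf B_p}$, i.e. $\mathbf E_i=\theta_i\mathbf B_p$ with $\theta_i=g_i(1)=\mathbf E_i(1,0)$. The range $1<p<2$ is identical, with $f_-$ in place of $f_+$, $g_i(-1)$ in place of $g_i(1)$, and $\theta_i=g_i(-1)/(1-p^*)$.

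It remains to fix the signs, and here the dichotomy $p\neq 2$ versus $p=2$ enters. For $p\neq 2$ the Burkholder function is \emph{strictly} rank-one concave in some direction: at $\xi=\zeta=1$ along $\xi_\circ=\zeta_\circ=1$ it restricts to $(2-p^*)\,2^{p-1}(1+t)^p$, which is not affine since $p\neq 1$ and $p^*\neq 2$. Hence $\mathbf B_p$ is not rank-one convex, so $\theta_i<0$ is impossible — otherwise $\mathbf B_p=\theta_i^{-1}\mathbf E_i$ would be both rank-one convex and rank-one concave, hence rank-one affine. Therefore every $\theta_i\ge 0$, with $\theta_i=0$ only if $\mathbf E_i\equiv 0$; substituting back into \eqref{Decomposition1}, $\mathbf B_p=\bigl(\sum_i\lambda_i\theta_i\bigr)\mathbf B_p$ forces $\sum_i\lambda_i\theta_i=1$, which is \eqref{DecompositionOutcome1}. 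When $p=2$, by contrast, $\mathbf B_2(\xi,\zeta)=|\zeta|^2-|\xi|^2=-\det Df$ is rank-one affine — a null Lagrangian, since $\int_\Omega\det D\eta$ is determined by boundary values, so $\mathbf B_2$ is both quasiconvex and quasiconcave — whence $\theta\mathbf B_2$ is rank-one concave for \emph{every} real $\theta$ and the sign constraint disappears; the rest runs verbatim ($g_i$ is now forced to be linear through the origin, hence a scalar multiple of $g_{\mathbf B_2}$), so each $\mathbf E_i$ is a real multiple of $\mathbf B_2$.

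The delicate point is the second step, extracting the pointwise law for $g_i$ from a single scalar identity: one must verify that admissible profiles can be chosen realizing every $u\in(-1,1)$ at a differentiability point of $\rho$, and that differentiation under the integral sign together with the application of Proposition~\ref{EnergyRdialLinear} are justified — both become painless once one restricts to smooth profiles linear near the origin, the condition \eqref{RhoCondition2} being exactly what one would otherwise need for the full admissible class. Everything else is routine bookkeeping.
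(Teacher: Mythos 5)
Your proposal is correct and follows essentially the same route as the paper: test the decomposition against radial stretchings, use Proposition \ref{EnergyRdialLinear} together with the profile-independence of the Burkholder energy ($\mathscr B_p[f]=\mathscr B_p[f^R]$) to force each component's energy to equal that of its linear boundary extension, upgrade this to the pointwise identity $\mathbf E_i=\theta_i\,\mathbf B_p$ by varying the radial profile, and read off the sign of $\theta_i$ from rank-one concavity. The paper extracts the pointwise identity by substituting the power stretchings $|z|^{\alpha-1}z$ on the annulus $1\leqslant|z|\leqslant R$ rather than by differentiating the energy identity in $R$, but this is the same mechanism (both hinge on realizing every distortion value $u\in(-1,1)$ by an admissible profile); your explicit justification of $\theta_i>0$ via the strict non-affineness of $\mathbf B_p$ along a rank-one line merely fills in a step the paper dispatches in one line.
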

The key observation to the proof is that Burkholder energy $\,\mathscr B_p [f] \,$  admits many stationary solutions. Among those are a number of radial power stretchings.
\begin{proof}
Let us test (\ref{Decomposition1}) with the radial stretchings as in \eqref{RadialStretchings}, $\,f = f_+\,$ if $ \,2 \leqslant p < \infty\,$ and $\,f = f_-\,$ if $ \,1 < p \leqslant 2\,$, requiring that \eqref{RhoCondition1} holds and additionally that $\rho(t) = t$ for $0 \leqslant t \leqslant 1$. Computing their energies in the disc $B(0,R)$
we have 
\begin{eqnarray}  &&\mathscr B_p [f] = \sum_{1\leqslant i \leqslant n}   \lambda_i\,\mathscr E_i [f] \; \leqslant \sum_{1\leqslant i \leqslant n}   \lambda_i\,\mathscr E_i [f^R] \, \;\;\;\;\;\Big \|\;\;{\textnormal{because} \;\; \mathscr E_i [f] \; \leqslant \mathscr E_i [f^R]\atop \textnormal{for every} \; i = 1, 2,..., n }\, \nonumber \\ &=& \sum_{1\leqslant i \leqslant n}   \lambda_i\,\pi R^{2-p} \,[\,\rho(R)\,]^p \,\mathbf E_i (I_\pm) \;\;\;\;\;\;\;\;\;\;\;\;\;\Big \|\,\; \textnormal{where}\; \;{\;\,I_+ = \textnormal{id}  \;\;,\, 2 \leqslant p <\infty\,\;\atop \;\,I_- = \overline{\textnormal{id}} \;\; ,\,1<p\,\leqslant \;2\,\,} \nonumber \\ & =&      \sum_{1\leqslant i \leqslant n}   \lambda_i\,\pi R^{2-p}\, [\,\rho(R)\,]^p \,\mathbf B_p (I_\pm) \; \theta _i \;\;\;\;\;\;\;\; \,\;\;\;\;\;\;\;\;\;\;\;\;\;\;\;\;\,\Big \|\;\,\textnormal{where}\;\;\theta _i = \frac{\mathbf E_i (I_\pm)}{\mathbf B_p (I_\pm)} \, \nonumber \\&=& \sum_{1\leqslant i \leqslant n}  \lambda_i\,\mathscr B_p [f] \; \theta_i \; 
 \nonumber
\end{eqnarray}
As obviously $\sum_{1\leq i\leq n} \lambda_1\theta_1=1,$ we see that this chain is possible only if
$$\mathscr E _i [f] \;=\; \theta _i \,\mathscr B_p [f] =\; \theta _i \,\mathscr B_p [f^R]\;,\;\;\textnormal{for all} \; i = 1, 2, ... , n\;, \;\textnormal {and all} \; R \geqslant 1\,. $$

We write it as:
$$\, \int_{|z|\leqslant R }\mathbf E _i [Df]\,\textnormal{d} z \;=\; \theta _i \,\int_{|z|\leqslant R }\mathbf B_p [Df]\,\textnormal{d} z .$$

Note that $\,Df (z) \equiv I_\pm \,$ for $ |z| \leqslant 1 \,$ and, by the definition of $\,\theta_i\,$, $\,\mathbf E _{\,i}\, [I_\pm] = \theta _{\,i} \mathbf B_p \,[I_\pm] \,$. Hence $\, \int_{|z|\leqslant 1 }\mathbf E _i [Df]\,\textnormal{d} z \;=\; \theta _i \,\int_{|z|\leqslant 1}\mathbf B_p [Df]\,\textnormal{d} z .$ The energy  equation reduces to:

\begin{equation}\label{ReducedIdentity}
\, \int_{1\leqslant|z|\leqslant R }\mathbf E _i [Df]\,\textnormal{d} z \;=\; \theta _i \,\int_{1 \leqslant |z|\leqslant R }\mathbf B_p [Df]\,\textnormal{d} z .
\end{equation}
We test this  by further specifying the radial stretchings also in the annulus $1\leq |z|\leq R$  by setting
\begin{equation}\label{Testfunctions}
f(z) = f_+(z)  =  |z|^{\alpha - 1}\;z  \;,\;\;\;\; f(z) = f_-(z)  =  |z|^{-\alpha - 1}\;\bar{z}, \quad \; -1 \leqslant \alpha \leqslant 1.
\end{equation}
Then $f$ is quasiconformal in the annulus if $\alpha\not=0$, but one might observe that 
$f$ is a homeomorphism of $\{ |z|<R\}$ only if $\alpha >0.$ 
In any case
\begin{equation}
2 \,|f_z(z)|  =  \,(\alpha + 1)  \,|z|^{\alpha - 1}  \;,\;\;\;\; 2 \,|f_{\bar{z}}(z)| = ( -\,\alpha + 1) \,|z|^{\alpha - 1}
\end{equation}
Substitute these formulas into (\ref{ReducedIdentity}) to obtain
\begin{equation}\nonumber
\, \int_{1\leqslant|z|\leqslant R } |z|^{\alpha p - p }\,\mathbf E _{\,i} (\alpha + 1  \,, -\alpha + 1 )\, \;=\; \theta _{\,i} \,\int_{1\leqslant|z|\leqslant R } |z|^{\alpha p - p }\,\mathbf B_p (\alpha + 1 \, , - \alpha + 1 )\,.
\end{equation}
Hence
\begin{equation}\nonumber
\mathbf E _{\,i} \big(\alpha + 1\, , - \alpha +1\, \big)\, \;=\; \theta _{i} \,\mathbf B_p \big(\alpha + 1  \,,\, - \alpha + 1\, \big)\,.
\end{equation}
By homogeneity and isotropy,
\begin{equation}\nonumber
\mathbf E _{\,i} (\xi\, , \,\zeta )\, \;=\; \theta _i \,\mathbf B_p (\xi \,,\, \zeta )\,,\;\;\;\textnormal{for all}\;\; \xi , \zeta \in \mathbb C
\end{equation}
Now, for $\,p\neq 2\,$ , since both $\,\mathbf E _{\,i}\,$ and $\, \mathbf B_p\,$ are of the same rank-one convexity type, we conclude that $\,\theta_i > 0\,$. However, in case $\,p= 2\,$ (null-Lagrangians) the coefficients  $\,\theta_{i} > 0\,$ are allowed to be negative as well.
This completes the proof of Proposition \ref{ResultOfDecomposition}.
\end{proof}

The proof of Proposition  \ref{ResultOfDecomposition} has an interesting consequence.

\begin{corollary} Let $\,\mathbf E \in \mathscr V_p \,$. Regardless of whether $\,\mathbf E\,$  is rank-one concave or not, the identity
\begin{equation}
\mathscr E[f] \bydef \int_{|z| \leqslant 1} \mathbf E(|f_z|\,,|f_{\bar{z}}| \,)\, \textnormal{d}z = \mathscr E[\textnormal{Id}] = \pi\;, \end{equation}
for all $ \; f(z) = \rho(|z|) \frac{z}{|z|}\,$ as in (\ref{RhoCondition1})\,,
yields  $\,\mathbf E(\xi , \,\zeta) = \mathbf B_p (\xi, \zeta)\,$. In particular, $\,\mathbf E\,$ must be rank-one concave.
\end{corollary}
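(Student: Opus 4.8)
The plan is to exploit exactly the computation already carried out in the proof of Proposition~\ref{ResultOfDecomposition}, applied now to the single function $\,\mathbf E\,$ in place of a component $\,\mathbf E_i\,$ of a decomposition. First I would record that the hypothesis is precisely the statement $\,\mathscr E[f] = \pi = \mathscr E[\textnormal{Id}]\,$ for every radial stretching $\,f(z) = \rho(|z|)\,z/|z|\,$ satisfying \eqref{RhoCondition1} with $\,\rho(0)=0\,$, and that $\,\mathscr E[\textnormal{Id}] = \mathbf E(1,1)\cdot\pi = \pi\,$ forces the normalisation $\,\mathbf E(1,1) = 1 = \mathbf B_p(1,1)/\!\bigl(\text{suitable constant}\bigr)\,$; more precisely one checks $\,\mathbf B_p(1,1) = (2-p^\ast)\cdot 2^{p-2}\,$ is a fixed nonzero number (for $\,p\neq 2\,$), and after rescaling $\,\mathbf E\,$ by that constant we may as well take $\,\mathbf E(1,1) = \mathbf B_p(1,1)\,$, i.e. $\,\theta = 1\,$ in the notation below.

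Next I would run the two-stage test-function argument verbatim. Stage one: take $\,\rho(t) = t\,$ for $\,0\le t\le 1\,$ and an arbitrary admissible extension on $\,[1,R]\,$; since $\,Df \equiv I\,$ on $\,|z|\le 1\,$ and $\,\mathbf E(I) = \mathbf B_p(I)\,$ there, the hypothesis $\,\mathscr E[f]=\mathscr E[\textnormal{Id}]\,$ combined with Proposition~\ref{EnergyRdialLinear} applied to $\,\mathbf B_p\,$ (which is rank-one concave) collapses to the annular identity
\begin{equation}\nonumber
\int_{1\le |z|\le R} \mathbf E(|f_z|,|f_{\bar z}|)\,\textnormal d z \;=\; \int_{1\le |z|\le R} \mathbf B_p(|f_z|,|f_{\bar z}|)\,\textnormal d z,
\end{equation}
exactly as in \eqref{ReducedIdentity} with $\,\theta_i = 1\,$. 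Stage two: plug in the power stretchings $\,f(z) = |z|^{\alpha-1}z\,$, $\,-1\le\alpha\le 1\,$, of \eqref{Testfunctions} on the annulus; the radial integration factors out $\,\int_1^R r^{\alpha p - p}\,\textnormal d r > 0\,$ on both sides, leaving
\begin{equation}\nonumber
\mathbf E(\alpha+1,\,-\alpha+1) \;=\; \mathbf B_p(\alpha+1,\,-\alpha+1)\qquad\text{for all }\alpha\in[-1,1].
\end{equation}
By homogeneity of degree $\,p\,$ and isotropy, the pairs $\,(|\xi|,|\zeta|) = t\,(\alpha+1,-\alpha+1)\,$ with $\,t\ge 0\,$, $\,\alpha\in[-1,1]\,$, sweep out all of $\,[0,\infty)\times[0,\infty)\,$, so $\,\mathbf E(\xi,\zeta) = \mathbf B_p(\xi,\zeta)\,$ everywhere; since $\,\mathbf B_p\,$ is rank-one concave, so is $\,\mathbf E\,$.

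The one point requiring a little care — and the only place where the argument is not purely mechanical — is stage one: I invoked $\,\mathscr E[f]\le\mathscr E[f^R]\,$ (Proposition~\ref{EnergyRdialLinear}) for $\,\mathbf B_p\,$ but not for $\,\mathbf E\,$, since $\,\mathbf E\,$ is not assumed rank-one concave a priori, yet the hypothesis hands us $\,\mathscr E[f] = \pi = \mathscr E[f^R]\,$ for $\,\mathbf E\,$ directly, so the needed equalities on both sides are available. One must also confirm that when $\,p>2\,$ the admissible power stretchings $\,f_+(z)=|z|^{\alpha-1}z\,$ used on the annulus can be glued to $\,\rho(t)=t\,$ on $\,[0,1]\,$ while keeping \eqref{RhoCondition1} (and, for the outer computation, that condition \eqref{RhoCondition2} is vacuous since we only integrate away from the origin). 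I expect this gluing bookkeeping to be the main obstacle — everything else is a transcription of the already-proven Proposition~\ref{ResultOfDecomposition}. Finally, the $\,p=2\,$ case is degenerate ($\,\mathbf B_2\,$ is a null Lagrangian) and can be handled separately by the same substitution, or simply noted as consistent since there $\,\mathbf E(\xi,\zeta) = |\zeta|^2-|\xi|^2\,$ drops out of the same linear equations.
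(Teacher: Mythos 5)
Your proposal is essentially the paper's own route: the paper offers no separate proof of this corollary, stating it precisely as a byproduct of the argument for Proposition \ref{ResultOfDecomposition}, and you correctly rerun that two-stage test (glue $\rho(t)=t$ on $[0,1]$ to the power stretchings on the annulus, cancel the common radial factor, then invoke isotropy and $p$-homogeneity), with the simplification that the hypothesis supplies the equality $\mathscr E[f]=\mathscr E[f^R]$ directly where Proposition \ref{ResultOfDecomposition} had to use rank-one concavity of each $\mathbf E_i$. Two small corrections: the normalisation should read $\mathscr E[\textnormal{Id}]=\pi\,\mathbf E(1,0)$, since $\textnormal{Id}$ has $(f_z,f_{\bar z})=(1,0)$, so the hypothesis forces $\mathbf E(I)=1=\mathbf B_p(I)$ outright and no rescaling of $\mathbf E$ is available or needed; and on the $\mathbf B_p$ side of stage one you need the exact identity $\mathscr B_p[f_\pm]=\mathscr B_p[f^R_\pm]$ from Section \ref{EnergyRadial}, not merely the inequality of Proposition \ref{EnergyRdialLinear}. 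Finally, for $1<p\leqslant 2$ the test family must be the stretchings $f_-(z)=\rho(|z|)\bar z/|z|$ (as in Sections \ref{EnergyRadial}--\ref{ExtremePoint}); running the computation with $f_+$ alone pins $\mathbf E$ down to $\bigl[u-(p-1)v\bigr](u+v)^{p-1}$, which coincides with $\mathbf B_p$ only when $p\geqslant 2$, so this case split is needed for the stated conclusion.
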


We now introduce a norm in the vector space $\,\mathscr V_p\,$
\begin{equation}
\|\,\textnormal E \| = \|\,\textnormal E \|_{\mathscr V_p} \,\bydef \sup_{|\xi|+|\zeta| = 1} |\,\textnormal E(\xi, \zeta) |
\end{equation}
so $\,(\mathscr V_p \;, \|\cdot \| )\,$ becomes a Banach space. The norm of Burkholder function equals
$$
\|\,\mathbf B_p \|\; = \,p^{\ast} \,- 1
$$
 Consider the subsets $\,\mathcal C_p^{\,\smallsmile} \subset \mathscr V_p^{\,\smallsmile}\,$ and $\,\mathcal C_p^{\,\smallfrown} \subset \mathscr V_p^{\,\smallfrown}\,$ of functions whose norm does not exceed $\,p^{\ast} \,- 1 \,$. These are convex sets.
\begin{theorem} \label{ExtremePoint} The Burkholder function $\,\mathbf B_p\,, \,p \neq 2\,,\,$ is an extreme point of $\,\mathcal C_p^{\,\smallfrown}\,$. Similarly, $-\mathbf B_p$ is an extreme point of $\,\mathcal C_p^{\,\smallsmile}\,$.
\end{theorem}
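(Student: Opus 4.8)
The plan is to argue by contradiction: suppose $\mathbf B_p$ lies in an open segment of $\mathcal C_p^{\,\smallfrown}$, so that $\mathbf B_p = \tfrac12(\mathbf G_1 + \mathbf G_2)$ with $\mathbf G_1, \mathbf G_2 \in \mathcal C_p^{\,\smallfrown}$ distinct. The first step is to reduce this to a decomposition of the type treated in Proposition~\ref{ResultOfDecomposition}. The obstacle is that the $\mathbf G_j$ need not be \emph{positive} multiples of rank-one concave functions, nor need they be nonnegative; Proposition~\ref{ResultOfDecomposition} as stated requires $\lambda_i > 0$ and $\mathbf E_i \in \mathscr V_p^{\,\smallfrown}$. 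So the real work is to manufacture, from the norm constraint $\|\mathbf G_j\| \le p^{\ast} - 1 = \|\mathbf B_p\|$, a genuine convex decomposition with strictly positive weights. The idea is the standard one for identifying extreme points of a ball in terms of the geometry at a boundary point: since $\mathbf B_p$ has maximal norm in $\mathcal C_p^{\,\smallfrown}$, the two summands $\mathbf G_j$ cannot ``spread'' without one of them exceeding the norm bound, unless they are already proportional to $\mathbf B_p$.

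Concretely, I would run the radial-stretching test directly on $\mathbf G_1$ and $\mathbf G_2$, mimicking the proof of Proposition~\ref{ResultOfDecomposition}. Each $\mathbf G_j$ is rank-one concave and isotropic, so by Proposition~\ref{EnergyRdialLinear} its energy on any disc is maximized by the linear (radially-linear) competitor; applying this to the one-parameter family \eqref{Testfunctions} and using $\mathbf G_1 + \mathbf G_2 = 2\mathbf B_p$, the same chain of inequalities as in the proof of Proposition~\ref{ResultOfDecomposition} collapses to equalities. This forces, exactly as there, $\mathscr G_j[f] = \theta_j \,\mathscr B_p[f]$ on every annulus $1 \le |z| \le R$ for the test maps, hence $\mathbf G_j(\xi,\zeta) = \theta_j\,\mathbf B_p(\xi,\zeta)$ for all $\xi,\zeta$, where $\theta_j = \mathbf G_j(I_\pm)/\mathbf B_p(I_\pm)$ and $\theta_1 + \theta_2 = 2$. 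Then the norm constraint enters: $\|\mathbf G_j\| = |\theta_j|\,\|\mathbf B_p\| \le \|\mathbf B_p\|$ gives $|\theta_j| \le 1$, and combined with $\theta_1 + \theta_2 = 2$ this forces $\theta_1 = \theta_2 = 1$, i.e. $\mathbf G_1 = \mathbf G_2 = \mathbf B_p$, contradicting distinctness. The statement for $-\mathbf B_p$ on $\mathcal C_p^{\,\smallsmile}$ follows by replacing $\mathbf E$ with $-\mathbf E$ throughout.

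The step I expect to be the genuine obstacle is the reduction in the first paragraph: verifying that the energy-comparison argument of Proposition~\ref{ResultOfDecomposition} applies verbatim when the summands $\mathbf G_j$ are merely rank-one concave elements of $\mathcal C_p^{\,\smallfrown}$ rather than positive multiples of fixed $\mathbf E_i$'s. One must check that the only ingredients used were rank-one concavity, isotropy, and $p$-homogeneity of each summand (so Proposition~\ref{EnergyRdialLinear} applies), together with the fact that their sum is $2\mathbf B_p$ for which equality in the radial-stretching computation of Section~\ref{EnergyRadial} holds. If that goes through, the norm bound does the rest with no further analysis. A secondary point to be careful about is the case $p=2$, which is genuinely excluded: there $\mathbf B_2$ is a null-Lagrangian and $\theta_j$ may be negative, so $|\theta_j| \le 1$ with $\theta_1 + \theta_2 = 2$ still forces $\theta_j = 1$ — but rank-one concavity no longer pins down the sign, and in fact $\mathbf B_2$ is \emph{not} extreme in $\mathcal C_2^{\,\smallfrown}$ since one can perturb within the null-Lagrangians; hence the hypothesis $p \neq 2$ is essential and should be invoked precisely where Proposition~\ref{ResultOfDecomposition} uses it.
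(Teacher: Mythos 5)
Your proof is correct and is essentially the paper's own argument: a convex decomposition of $\mathbf B_p$ inside $\mathcal C_p^{\,\smallfrown}$ is fed into Proposition~\ref{ResultOfDecomposition} to yield $\mathbf G_j=\theta_j\,\mathbf B_p$ with $\theta_1+\theta_2=2$, and the norm bound $\|\mathbf G_j\|=\theta_j\,\|\mathbf B_p\|\leqslant p^{\ast}-1=\|\mathbf B_p\|$ forces $\theta_1=\theta_2=1$. The ``obstacle'' you flag is vacuous, since $\mathcal C_p^{\,\smallfrown}$ is by definition a subset of $\mathscr V_p^{\,\smallfrown}$, so the Proposition applies verbatim with $\lambda_1=\lambda_2=\tfrac12$; only your parenthetical claim that $\mathbf B_2$ fails to be extreme is unsubstantiated (and sits uneasily with your own observation that $|\theta_j|\leqslant 1$ and $\theta_1+\theta_2=2$ still force $\theta_j=1$), but this does not affect the theorem, which excludes $p=2$.
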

\begin{proof}
Consider a convex combination of $\,\mathbf B_p = \mathbf B_p^{\,\smallfrown}\,$

\begin{equation}\label{Decomposition}
\mathbf B_p  =\sum_{1\leqslant i \leqslant n}   \lambda_{\,i}\,\mathbf E_{\,i} \;,\;\; \lambda_{\,i} > 0\,,\;\lambda _1 + ... + \lambda _n = 1\,, \;\textnormal{where $\,\mathbf E_{\,i} \in \mathcal C_p ^\smallfrown\,$.}
\end{equation}
By Proposition (\ref{ResultOfDecomposition}) there exist positive numbers $\,\theta _i > 0\,$ such that
\begin{equation}\label{DecompositionOutcome}
\mathbf E _i \;\equiv\; \theta _i \,\mathbf B_p\;,\;\;\textnormal{for all} \; i = 1, 2, ... , n\;,\;\;\textnormal{and}\;\; \sum_{1\leqslant i \leqslant n}   \lambda_i\,\theta_i  \;= 1 .
\end{equation}
Computing the norms yields:
$$
\,p^{\ast} \,- 1 \geqslant \|\,\mathbf E _i\,\| =  \,\|\,\mathbf B_p \|\,\theta_i =  (p^{\ast} \,- 1)\, \theta_i
$$
Therefore $\,\theta _i \leqslant 1\,$, for every $\, i = 1, 2, ... , n\,$. On the other hand, in view of $\,\sum_{1\leqslant i \leqslant n}   \lambda_i\,\theta_i  \;= 1\,$ and $\,\lambda _1 + ... + \lambda _n = 1\,$, we have $\,\theta _i = 1\,$, for every $\, i = 1, 2, ... , n\,$. This means that each $ \,\mathbf E _i\,$ equals $\,\mathbf B_p\,$, as desired.
\end{proof}
\section{Burkholder's function is a maximal element}

\begin{theorem} \label{Uniqueness} Among all rank-one concave functions $\,\mathbf E \,:\,\mathbb C \times \mathbb C \rightarrow \mathbb R\,$  that are isotropic and homogeneous of degree $\, p > 1\,$, the function $\, \mathbf B_p(\xi, \zeta)  \,=  \big[|\,\xi| -  ( p^{_\ast}- 1)\, |\,\zeta|  \,\big]\cdot \big[\, |\xi|\, + \,|\zeta|\,\big ] ^ {p-1}\,$ is a maximal one; that is, the inequality
\begin{equation}
 \mathbf B_p(\xi, \zeta)  \,\;\leqslant \;\mathbf E (\xi, \zeta)\,, \;  \;\textnormal{for all}\;\; (\xi,\zeta)  \in \mathbb C \times \mathbb C\,,
\end{equation}
forces $\,\mathbf E\,$ to be equal to $\,\,\mathbf B_p\,$.
\end{theorem}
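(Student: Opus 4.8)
The plan is to pull the rigidity out of the exact Burkholder energy of radial stretchings (Section~\ref{EnergyRadial}) and the generic rank-one bound of Proposition~\ref{EnergyRdialLinear}. Suppose $\mathbf E\colon\mathbb C\times\mathbb C\to\mathbb R$ is rank-one concave, isotropic and homogeneous of degree $p>1$ with $\mathbf B_p\leqslant\mathbf E$ everywhere, and put $\mathbf D:=\mathbf E-\mathbf B_p\geqslant 0$; the target is $\mathbf D\equiv 0$. I would use only the stretchings \eqref{RadialStretchings} — $f_{_+}$ for $p\geqslant 2$, $f_{_-}$ for $1<p<2$ — whose profile $\rho$ satisfies \eqref{RhoCondition1} (hence also \eqref{RhoCondition2}, automatically) and equals $t$ on $[0,1]$. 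For these, $Df_\pm\equiv I_\pm$ on the unit disc, where $I_+=(1,0)$ and $I_-=(0,1)$ are the differentials of $\textnormal{id}$, resp.\ $\overline{\textnormal{id}}$; they have the form \eqref{TwistedRadialMap}; and Section~\ref{EnergyRadial}, which equates $\int_{|z|\leqslant R}\mathbf B_p(Df_\pm)$ with the energy of the linear extension, together with $p$-homogeneity of $\mathbf B_p$, gives
\[
\int_{|z|\leqslant R}\mathbf B_p(Df_\pm)=\pi R^{2-p}\rho(R)^{p}\,\mathbf B_p(I_\pm).
\]

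\emph{First step: pinning $\mathbf E$ at $I_\pm$.} Here I would take $\rho(r)=1/r$ on $[1,R]$ for some $R>1$ (still admissible, matching $\rho(t)=t$ at $r=1$). Splitting $\{|z|\leqslant R\}$ into the unit disc plus the shell $\{1\leqslant|z|\leqslant R\}$, the pointwise inequality $\mathbf E\geqslant\mathbf B_p$ on the shell together with the identity above produces the lower bound
\[
\int_{|z|\leqslant R}\mathbf E(Df_\pm)\;\geqslant\;\pi\bigl(\mathbf E(I_\pm)-\mathbf B_p(I_\pm)\bigr)+\pi R^{2-p}\rho(R)^{p}\,\mathbf B_p(I_\pm),
\]
while Proposition~\ref{EnergyRdialLinear} applied to the rank-one concave $\mathbf E$, with homogeneity and isotropy, gives $\int_{|z|\leqslant R}\mathbf E(Df_\pm)\leqslant\pi R^{2-p}\rho(R)^{p}\mathbf E(I_\pm)$. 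Comparing the two, with $\mathbf D(I_\pm)=\mathbf E(I_\pm)-\mathbf B_p(I_\pm)\geqslant 0$, one is left with $\mathbf D(I_\pm)\bigl(1-R^{2-p}\rho(R)^{p}\bigr)\leqslant 0$; since $\rho(R)=1/R$ makes $R^{2-p}\rho(R)^{p}=R^{2-2p}<1$ for $R>1$, $p>1$, this forces $\mathbf D(I_\pm)\leqslant 0$, hence $\mathbf E(I_\pm)=\mathbf B_p(I_\pm)$.

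\emph{Second step: spreading to all directions.} With $\mathbf E(I_\pm)=\mathbf B_p(I_\pm)$ in hand, the two bounds of the first step, now carried out for an arbitrary admissible $\rho$ extending $\rho(t)=t$, pinch to an equality $\int_{|z|\leqslant R}\mathbf E(Df_\pm)=\int_{|z|\leqslant R}\mathbf B_p(Df_\pm)$; since $\mathbf D$ vanishes on the unit disc this leaves $\int_{1\leqslant|z|\leqslant R}\mathbf D(Df_\pm)=0$. I would then insert the power stretchings \eqref{Testfunctions}, $\rho(r)=r^{\alpha}$ for $f_{_+}$ and $\rho(r)=r^{-\alpha}$ for $f_{_-}$, $-1\leqslant\alpha\leqslant1$: for these $(|f_z|,|f_{\bar z}|)$ is a positive multiple of $(1+\alpha,\,1-\alpha)$ on every sphere, so $p$-homogeneity converts the vanishing of $\int_{1\leqslant|z|\leqslant R}\mathbf D(Df_\pm)$ (whose integrand is $\geqslant 0$) into $\mathbf D(1+\alpha,\,1-\alpha)=0$ for every $\alpha\in[-1,1]$. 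As $\alpha$ ranges over $[-1,1]$ the rays spanned by $(1+\alpha,1-\alpha)$ exhaust the first quadrant, so homogeneity and isotropy of $\mathbf D$ give $\mathbf D\equiv 0$, i.e.\ $\mathbf E=\mathbf B_p$.

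\emph{Where the work sits.} All the comparisons used are immediate except the crucial \emph{upper} estimate $\mathbf E(I_\pm)\leqslant\mathbf B_p(I_\pm)$ of the first step: the hypothesis $\mathbf E\geqslant\mathbf B_p$, Proposition~\ref{EnergyRdialLinear} and Section~\ref{EnergyRadial} by themselves only reconfirm $\mathbf E(I_\pm)\geqslant\mathbf B_p(I_\pm)$. The device is the deformation that is frozen to the linear model on the unit disc but drains off the boundary modulus on a long outer shell ($\rho(R)=1/R$), so that the rank-one ceiling $\pi R^{2-p}\rho(R)^{p}\mathbf E(I_\pm)$ is pushed below the disc contribution $\pi\mathbf E(I_\pm)$ unless $\mathbf E$ is already extremal there; one must watch the sign of $p-2$, hence the choice of $f_{_+}$ versus $f_{_-}$, to be sure $R^{2-p}\rho(R)^{p}<1$. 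Granting that, the remainder is the routine ``a nonnegative integrand with integral zero vanishes.'' (For $p\geqslant 2$ one could instead close by invoking the Corollary to Proposition~\ref{ResultOfDecomposition}, whose hypothesis $\mathscr E[f]=\mathscr E[\textnormal{Id}]=\pi$ over all such radial stretchings is precisely what the first step furnishes together with Proposition~\ref{EnergyRdialLinear} and Section~\ref{EnergyRadial}.)
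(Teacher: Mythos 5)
Your argument is correct, and it runs on exactly the same machinery as the paper's proof: the radial stretchings that are linear on the unit disc, the exact energy identity of Section~\ref{EnergyRadial}, the ceiling $\mathscr E[f]\leqslant\mathscr E[f^R]$ from Proposition~\ref{EnergyRdialLinear}, reduction to the annulus, and the power maps \eqref{Testfunctions} to convert the integral statement into a pointwise one. The single place you diverge is the normalization at $I_\pm$. The paper does not pin $\mathbf E(I_\pm)=\mathbf B_p(I_\pm)$ in advance; it carries the unknown ratio $\theta=\mathbf E(I_\pm)/\mathbf B_p(I_\pm)$ through the whole chain, arrives at the pointwise sandwich $\mathbf B_p\leqslant\mathbf E\leqslant\theta\,\mathbf B_p$, and then kills $\theta$ in one line by observing that $\mathbf B_p$ takes both signs, so $\mathbf B_p\leqslant\theta\,\mathbf B_p$ everywhere forces $\theta=1$. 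Your replacement — the profile $\rho(r)=1/r$ on the outer shell, which makes the linear-extension ceiling $\pi R^{2-2p}\mathbf E(I_\pm)$ strictly smaller than the unit-disc contribution and thereby squeezes $\mathbf D(I_\pm)\leqslant 0$ — is a genuinely new sub-step, correctly checked against \eqref{RhoCondition1} and \eqref{RhoCondition2}, and it buys you independence from the sign structure of $\mathbf B_p$ (it would survive for a majorant problem where the extremal function did not change sign). The price is length: the paper's endgame is shorter, and your first step is not needed once one notices the sign-change observation. Both closings of the second step (nonnegative integrand with zero integral, or the Corollary to Proposition~\ref{ResultOfDecomposition}) are fine.
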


\begin{proof}
The proof goes through as for Proposition \ref{ResultOfDecomposition}\,, with a slight change. Under the same notation, we
begin with an  energy estimate in the ball $B(0,R)$ (with $R>1$) for the special radial stretchings (7.4)
depending on parameter $\alpha$. Thus

\begin{eqnarray} && \mathscr B_p [f] \leqslant \,\mathscr E [f] \; \leqslant \mathscr E [f^R] \, \;\;\;\;\;\;\;\;\;\;\;\;\;\;\;\;\;\;\;\;\;\;\;\Big (\;\;\textnormal{by Proposition \ref{EnergyRdialLinear} }\;\Big) \, \nonumber \\ &=& \,\pi R^{2-p} \rho^p(R) \,\mathbf E (I_\pm) \;\;\;\;\;\;\;\;\;\;\;\;\;\;\;\;\;\;\;\;\;\;\;\;\;\;\,\Big (\,\textnormal{where}\,\,{\;\,I_+ = \textnormal{id}  \;\;,\, 2 \leqslant p <\infty\,\;\atop \;\,I_- = \overline{\textnormal{id}} \;\; ,\,1<p\,\leqslant \;2\,\,}\,\Big)\, \nonumber \\ & =&      \,\pi R^{2-p} \rho^p(R) \,\mathbf B_p (I_\pm) \; \theta  \;=\;\mathscr B_p [f^R]\,\theta =\,\mathscr B_p [f] \; \theta\;\;\,\;\;\;\;\;\;\;\;\Big(\;\theta \;=\; \frac{\mathbf E (I_\pm)}{\mathbf B_p (I_\pm)} \,\Big)\, \nonumber
\end{eqnarray}

Hence

$$\, \int_{|z|\leqslant R }\mathbf E [Df]\,\textnormal{d} z \;\leqslant\; \theta  \,\int_{|z|\leqslant R }\mathbf B_p [Df]\,\textnormal{d} z .$$

Note that $\,Df (z) \equiv I_\pm \,$ for $ |z| \leqslant 1 \,$ and $\,\mathbf E [I_\pm] = \theta\, \mathbf B_p[I_\pm] \,$. Therefore $\, \int_{|z|\leqslant 1 }\mathbf E[Df]\,\textnormal{d} z \;=\; \theta \,\int_{|z|\leqslant 1}\mathbf B_p [Df]\,\textnormal{d} z .$ The energy inequality reduces to:

\begin{equation}\label{ReducedIdentity2}
\, \int_{1\leqslant|z|\leqslant R }\mathbf E[Df]\,\textnormal{d} z \;\leqslant\; \theta\,\int_{1 \leqslant |z|\leqslant R }\mathbf B_p [Df]\,\textnormal{d} z .
\end{equation}
As before, since we are testing (\ref{ReducedIdentity2})\,with the maps (\ref{Testfunctions})\, using all values of $\; -1 < \alpha < 1\,$,
this results in a point-wise inequality
\begin{equation}\nonumber
\mathbf B_p (\xi, \zeta) \leqslant \mathbf E(\xi, \zeta) \,\;\leqslant\;\theta\;\mathbf B_p (\xi, \zeta)\,,\;\;\;\textnormal{with a constant $\,\theta \in \mathbb R \,$ and all}\;\; \xi\,, \zeta \in \mathbb C\,.
\end{equation}
We must have $\,\theta\,$ equal to 1, because the function $\,\mathbf B_p\,$ attains both strictly positive and strictly negative values. This implies that the first inequality must actually be an equality.
\end{proof}

\section{Local maxima} \label{localmax}
Theorem \ref{AIPS} yields a number of interesting properties for the Burkholder function. 
For instance, under an additional assumption on $\,\rho\,$ the $\,\mathbf B_p\,$-energy, $\,p > 2\,$, assumes its local maximum at the radial stretchings $\,f(z) =  \rho(|z|) \frac{z}{|z|}\,,$ in an explicitly specified neighbourhood of $f$. To see this,  assume that $\,\rho: [0 , 1 ] \rightarrow [0, 1]\,$ is Lipschitz continuous,  $\,\rho(0) = 0\,,\; \rho(1) = 1\,$,  and that for almost every $\, r \in [ 0 , 1 ]\,$ it holds:
\begin{equation}\label{RhoCondition}
 \rho(r) \geqslant r \,\rho\,' (r)\,\geqslant \big(1 - \,2/s\big)\; \rho(r)\,,\;\; \textnormal{for some}\; s > p\,.
\end{equation}
\begin{proposition}\label{LocalMaximum} Consider an  $\,\varepsilon\,$-perturbation of $\,f\,$
\begin{equation}\label{Perturbation}
f^\varepsilon (z) = \rho(|z|) \frac{z}{|z|} \,+\,\varepsilon(z)\;,
\end{equation}
with   $\,\varepsilon\in \mathscr C^1_\circ(\mathbb D) \,$, small enough to satisfy
\begin{equation}\label{Smallepsilon}
 (p-1)\,|\,\varepsilon_{\bar{z}}| \,+\, |\,\varepsilon_z| \leqslant 1 - \frac{p}{s}
 \end{equation}
Then
\begin{equation}\label{LocalInequality}
\mathscr B_p [ f^\varepsilon ] \leqslant \mathscr B_p [ f] \;=\; \mathscr B_p [\textnormal{Id}] = \pi
\end{equation}
\end{proposition}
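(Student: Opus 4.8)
The plan is to reduce the perturbed inequality \eqref{LocalInequality} to an application of Theorem \ref{AIPS} by checking that, under the smallness hypothesis \eqref{Smallepsilon}, the perturbed map $f^\varepsilon$ lies in the class covered by that theorem: it must agree with a linear map on the boundary, it must be $K$-quasiconformal for the $K$ determined by $\frac{1}{K}=1-\frac{2}{p}$ (equivalently $p=\frac{2K}{K-1}$), and it must satisfy $\mathbf B_p(Df^\varepsilon)\geqslant 0$ a.e. Since $\varepsilon\in\mathscr C^1_\circ(\mathbb D)$, we have $f^\varepsilon=f$ on $\partial\mathbb D$, and by the computation in Section \ref{EnergyRadial} the radial stretching $f$ itself equals the linear map $z\mapsto\frac{\rho(1)}{1}z=z$ on $\mathbb S_1$; so $f^\varepsilon=\textnormal{Id}$ on $\partial\mathbb D$, and the middle equality $\mathscr B_p[f]=\mathscr B_p[\textnormal{Id}]=\pi$ is exactly the energy identity already established for radial stretchings in Section \ref{EnergyRadial}. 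This disposes of the easy parts.

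The substantive point is the pointwise bound on the distortion. First I would record the complex derivatives of the unperturbed stretching: from Section \ref{EnergyRadial}, with $r=|z|$, one has $|f_z|=\tfrac12\big(\tfrac{\rho(r)}{r}+\rho'(r)\big)$ and $|f_{\bar z}|=\tfrac12\big(\tfrac{\rho(r)}{r}-\rho'(r)\big)$, both nonnegative by \eqref{RhoCondition1} (which is implied by \eqref{RhoCondition}). The key quantitative consequence of the lower bound $r\rho'(r)\geqslant(1-2/s)\rho(r)$ in \eqref{RhoCondition} is that it controls the distortion ratio of $f$: a short computation gives
\[
\frac{|f_{\bar z}|}{|f_z|}=\frac{\rho(r)/r-\rho'(r)}{\rho(r)/r+\rho'(r)}\leqslant\frac{1-(1-2/s)}{1+(1-2/s)}=\frac{1/s}{1-1/s}=\frac{1}{s-1},
\]
so $f$ is $K_0$-quasiconformal with $\frac{1}{K_0}=1-\frac{2}{s}$, and because $s>p$ this is strictly better (smaller distortion) than the threshold $\frac1K=1-\frac2p$. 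Now write $g_z=f_z+\varepsilon_z$, $g_{\bar z}=f_{\bar z}+\varepsilon_{\bar z}$ for $g=f^\varepsilon$. Using the triangle inequality and \eqref{Smallepsilon}, I would estimate $|g_{\bar z}|\leqslant|f_{\bar z}|+|\varepsilon_{\bar z}|$ and $|g_z|\geqslant|f_z|-|\varepsilon_z|$, and then bound $(p^{*}-1)|g_{\bar z}|=(p-1)|g_{\bar z}|$ (since $p>2$) against $|g_z|$; the slack between $\frac{1}{s-1}$ and $\frac{1}{p-1}$ coming from $s>p$ is precisely what the hypothesis $(p-1)|\varepsilon_{\bar z}|+|\varepsilon_z|\leqslant 1-\frac{p}{s}$ is calibrated to absorb, yielding $(p-1)|g_{\bar z}|\leqslant|g_z|$ pointwise, i.e. simultaneously $g$ is $K$-quasiconformal with $\frac1K=1-\frac2p$ and $\mathbf B_p(Dg)\geqslant 0$ a.e. (recall $\mathbf B_p(\xi,\zeta)=(|\xi|-(p-1)|\zeta|)(|\xi|+|\zeta|)^{p-1}\geqslant 0$ exactly when $(p-1)|\zeta|\leqslant|\xi|$).

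With these two properties verified, Theorem \ref{AIPS} applies directly to $g=f^\varepsilon$ on $\Omega=\mathbb D$ with boundary map $\textnormal{Id}$, giving $\int_{\mathbb D}\mathbf B_p(Df^\varepsilon)\,\textnormal d z\leqslant\int_{\mathbb D}\mathbf B_p(I)\,\textnormal d z=\pi$, which is \eqref{LocalInequality}. The main obstacle, and the step deserving the most care, is the pointwise distortion estimate: one must track the normalisation factor $\tfrac12$ in the derivatives, handle the region near $r=0$ where $\rho(r)/r$ and $\rho'(r)$ are both comparable to $1$ (the condition $\rho(0)=0$, $\rho(1)=1$ together with \eqref{RhoCondition} keeps $\rho(r)/r$ bounded above and below), and verify that the constant $1-\frac{p}{s}$ in \eqref{Smallepsilon} is exactly what makes the inequality $(p-1)(|f_{\bar z}|+|\varepsilon_{\bar z}|)\leqslant|f_z|-|\varepsilon_z|$ close. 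Everything else is bookkeeping built on the energy identity of Section \ref{EnergyRadial} and the quoted Theorem \ref{AIPS}.
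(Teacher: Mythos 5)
Your proposal follows essentially the same route as the paper's proof: reduce matters to the pointwise distortion bound $(p-1)\,|f^\varepsilon_{\bar z}|\leqslant |f^\varepsilon_z|$ (which gives both $K$-quasiconformality and $\mathbf B_p(Df^\varepsilon)\geqslant 0$) and then invoke Theorem \ref{AIPS}. The one computation you defer does close exactly as you anticipate: since $\rho(r)/r$ is nonincreasing and $\rho(1)=1$, one gets $|f_z|\geqslant \frac{1}{2}\bigl(1+1-\frac{2}{s}\bigr)=\frac{s-1}{s}$, so \eqref{Smallepsilon} gives $(p-1)|\varepsilon_{\bar z}|+|\varepsilon_z|\leqslant \bigl(1-\frac{p}{s}\bigr)\frac{s}{s-1}|f_z|=\frac{s-p}{s-1}|f_z|$, and hence $(p-1)|f^\varepsilon_{\bar z}|-|f^\varepsilon_z|\leqslant \bigl(\frac{p-1}{s-1}-1+\frac{s-p}{s-1}\bigr)|f_z|=0$.
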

\begin{proof} The inequality (\ref{LocalInequality})\,would hold if (according to Conjecture  \ref{Quasiconvexity-at-0}) $\,\mathbf B_p\,$ was quasiconcave, by the very definition of quasiconcavity; consequently, condition at (\ref{Smallepsilon}) would be redundant. But we do not know the answer to this conjecture. Fortunately, there is a very satisfactory partial answer; namely, inequality (\ref{LocalInequality}) holds whenever the energy integrand $\,\mathbf B_p (|f^\varepsilon_z| \,,\, | f^\varepsilon_{\bar{z}}\,| ) \,$ is nonnegative  and $\,f^\varepsilon(z) \equiv z\,$ for $\,|z| = 1\,$, see Theorem \ref{AIPS}. Thus, we are reduced to proving the distortion inequality
$$\,\frac{|f^\varepsilon_{\bar{z}} |}{|f^\varepsilon_z| } \leqslant \frac{1}{p-1}$$
The essence of the condition (\ref{RhoCondition}) is the following slightly stronger distortion inequality for the mapping $\,f\,$ :

$$\,\frac{|f_{\bar{z}} |}{|f_z| } = \frac{\rho(r) \,-\, r\rho\,'(r)} {\rho(r) \,+\, r\rho\,'(r)} \leqslant \frac{1}{s-1} < \frac{1}{p-1} \;\;\;\;\;\;\;\;\;\;\textnormal{(\,by\,\,\ref{RhoCondition})}\;.$$
This leaves us a margin for small perturbations of $\,f\,$. Here is how one can exploit this margin.
First the condition $ \, \rho(r) \geqslant r \,\rho\,' (r)\, $ tells us that the function $\,\frac{\rho(r)}{r} \,$ is nonincreasing. Since  $\,\frac{\rho(1)}{1}  = 1\,$, we see that  $\,\frac{\rho(r)}{r} \geqslant 1\,$ and, again by (\ref{RhoCondition}), we obtain
  $$\, \,|f_z| = \frac{1}{2} \left(\frac{\rho(r)}{r}  +  \rho \,'(r) \right) \geqslant \, \frac{1}{2}\left (1 + 1- 2/s\right) \, = \frac{s-1}{s} \;,\; \textnormal{thus}\; 1 \leqslant  \frac{s}{s-1} \,|\,f_z\,| \,.$$
  Next we estimate the derivatives of $\,f^\varepsilon\,$,
  $$
  |\,f^\varepsilon_{\bar{z}}\,| \leqslant |\,f_{\bar{z}}\,|\; +\,|\,\varepsilon_{\bar{z}}\,|\;,\;\;\;\;\;|\,f^\varepsilon_z\,| \geqslant |\,f_z\,|\; - \,|\,\varepsilon_z\,|
  $$
  Hence
  \begin{eqnarray}
(p-1) \,|\,f^\varepsilon_{\bar{z}}\,| \;-\;|\,f^\varepsilon_z\,| &\leqslant & (p-1) \,|\,f_{\bar{z}}\,| \;-\;|\,f_z\,|  \, + \;(p-1)\,|\,\varepsilon_{\bar{z}}| \,+\, |\,\varepsilon_z|\nonumber\\ &\leqslant& \frac{p-1}{s-1} \,|\,f_z| \; - |\,f_z\,| + \left( 1 - \frac{p}{s}\right)  \cdot \frac{s}{s-1} \,|\,f_z\,|\; = 0\,.\nonumber
\end{eqnarray}
as desired.
 \end{proof}

 \section{Radial Mappings as Stationary Solutions}

In order to speak of the Lagrange-Euler equation we have to increase regularity requirements on the integrand and on the mappings in question.
Consider a general isotropic energy functional;
\begin{equation}\label{Isotropic}
\mathscr E[f] = \int_\Omega \textbf{E} \big(|f_z|\,,\, |f_{\bar{z}}|  \big) \,\textrm{d}z\;.
\end{equation}
Here the function $\,\textbf{E} = \textbf{E}( u\,,\, v )\,$ is defined and  continuous on $\,[0 , \infty)\times [0 , \infty)\,$. We assume that $\,\textbf{E} \,$ is  $\,\mathscr C^2\,$-smooth in the open region $\, \mathbb R_+\times\mathbb R_+  = (0 , \infty)\times (0 , \infty)\,$.
A map $\, f \in \mathscr C^1(\Omega)\,$ such that
$$
 \big(\,|f_z(z)| \,,\; |f_{\bar{z}}(z)\,\big) \in \, \mathbb R_+\times\mathbb R_+ \,  \;\quad\; \textrm{for every}\;\; z\in \Omega \subset \mathbb C
$$
is a critical point, or stationary solution, for (\ref{Isotropic}) if for each test function $\,\eta \in \mathscr C_\circ^\infty(\Omega)\,$ it holds
$$
 \frac{\partial}{\partial \,\bar{\tau} }\;\mathscr E[ f + \tau \,\eta ] \,\big|_{\tau = 0}  \;\;\;= \;0\,\;\;\quad\;  \;\;\;\textnormal{ (here $\tau$ is a complex variable)}  $$
 It should be noted that we are using the Cauchy-Riemann derivative $\,\partial/\partial \,\bar{\tau} \,$ in the derivation of the variation of the energy functional. This leads to an integral form of the Euler-Lagrange equation
$$
 \int _\Omega \Big[\,\frac{\partial \textbf{E}}{\partial u} \,\frac{f_z}{|f_z| }\,\,\overline{\eta_z} \;+\; \frac{\partial \textbf{E}}{\partial v} \,\frac{f_{\bar{z}}}{|f_{\bar{z}}| }\,\,\overline{\eta_{\bar{z}}}  \,\;\Big]\,\textrm{d}z \;=\;0
$$
Integration by parts yields a second order divergence type PDE
\begin{equation}\label{Lagrange}
 \Big[\,\frac{\partial \textbf{E}}{\partial u} \,\frac{f_z}{|f_z| } \;\Big]_{\bar{z}} +\; \Big[\,\frac{\partial \textbf{E}}{\partial v} \,\frac{f_{\bar{z}}}{|f_{\bar{z}}| }\;\Big]_z \, \;=\;0
\end{equation}
in the sense of distributions. From now on we assume that $\, f \in \mathscr C^2(\Omega)\,$ and abbreviate the notation for partial derivatives of $\,\textbf{E}\,$ to $\,\textbf{E}_u\,$ and $\,\textbf{E}_v\,$, respectively. Let us also introduce the auxiliary functions:
$$
\alpha = \alpha(z) = \frac{f_z}{|f_z|}\, \in \mathbb S^1 \;\quad\;\textrm{and}\;\;\; \beta = \beta(z) = \frac{f_{\bar{z}}}{|f_{\bar{z}}|}\, \in \mathbb S^1
$$
$$
 u = u(z) = |f_z|\;\quad\;\; v= v(z) = |f_{\bar {z}}|
$$
Upon lengthy though elementary computation the Euler-Lagrange system (\ref{Lagrange}) takes the form
\begin{eqnarray}\label{lagrangeEuler}
\lefteqn{\quad\;\;\; \big( \,\bar{\,\alpha}\,^2\, f_{zz} \;+\; \overline{f_{z\bar{z}}} \,\big)\, \textbf{E}_{uu}  + }\\\nonumber
& & \big(\, 2\, \bar{\alpha} \bar{\beta}\, f_{z\bar{z}}\;+ \;\bar{\alpha} \beta \; \overline{f{_{\bar{z}\bar{z}}}}\;+\; \alpha \bar{\beta} \;\overline{f_{zz}}\,\big) \,\textbf{E}_{uv}+\\\nonumber
& & \big(\,\bar{\beta}\,^2\; f_{\bar{z}\bar{z}} \;+\; \overline{f_{z\bar{z}}}  \,\big) \,\textbf{E}_{vv}+\\\nonumber
& & \big(\,\overline{f_{z\bar{z}}}\;-\;\bar{\;\alpha}\,^2\, f_{zz}    \,\big) \,u^{-1}\,\textbf{E}_u+\\\nonumber
& & \big(\,\overline{f_{z\bar{z}}}\;-\;\bar{\beta}\,^2\; f_{\bar{z}\bar{z}} \,\big) \,v^{-1}\,\textbf{E}_v \;\quad\quad\quad =\;0\nonumber
\end{eqnarray}
The question arises when a radial stretching
$$\,f(z) = \rho(\,|z - a|\,) \;\frac{z -a }{|z -a |}\;\;\;+\;\; b\,$$
 satisfies this system (\ref{lagrangeEuler}).  We need only examine the case $\,a=0\,$ and $\,b = 0\,$. Recall formulas for the derivatives:
\begin{equation}\label{ComplexDerivatives}
  f_z(z)   = \frac{1}{2} \Big(\, \dot{\rho}(|z|)\;+\frac{\rho(|z|)}{|z|}\;\Big )\;\quad\;f_{\bar{z}}(z)   = \frac{1}{2} \Big(\, \dot{\rho}(|z|)\;-\;\frac{\rho(|z|)}{|z|}\;\Big )\frac{z}{\bar{z}}
 \end{equation}
As mentioned before, the Euler-Lagrange equation requires $\,\mathscr C^2$-regularity of $\,f\,$. Because of this, we assume that $\,\ddot{\rho}\,$ is continuous. Now, further differentiation of (\ref{ComplexDerivatives}) gives second order derivatives
\begin{eqnarray}\label{E-L}
\lefteqn{\quad\;\;\; 4 f_{zz} \,= \, \Big(\,\ddot{\rho}\;+\;\frac{\dot{\rho}}{|z|}\;-\; \frac{\rho}{|z|^2}\;\Big)\;\frac{\bar{z}}{|z|}}\\\nonumber
& & 4 f_{z\bar{z}} \,= \, \Big(\,\ddot{\rho}\;+\;\frac{\dot{\rho}}{|z|}\;-\; \frac{\rho}{|z|^2}\;\Big)\;\frac{z}{|z|}\\\nonumber
& & 4 f_{\bar{z}\bar{z}} \,= \, \Big(\,\ddot{\rho}\;-\;\frac{3\,\dot{\rho}}{|z|}\;+\; \frac{3\,\rho}{|z|^2}\;\Big)\;\frac{z^3}{|z|^3}\\\nonumber
\end{eqnarray}
For the results in this section we further assume that
 $$\,\rho(|z|) \; > \;|z|\, \dot{\rho}(|z|)\;$$ and hence $\,\alpha \equiv 1\,$ and $\,\beta = \,-\,z/\bar{z}\,$. The Euler-Lagrange equation takes the form
\begin{equation}\label{leEquation}
\Big(\ddot{\rho}\;+\;\frac{\dot{\rho}}{|z|}\;-\; \frac{\rho}{|z|^2}\Big)\textbf{E}_{uu} \;-\,\; 2\,{\ddot{\rho}}\, \textbf{E}_{uv}\;+\; \Big(\ddot{\rho}\;-\;\frac{\dot{\rho}}{|z|}\;+\; \frac{\rho}{|z|^2}\Big)\textbf{E}_{vv} \;=\, \frac{4}{|z|}\, \textbf{E}_v
\end{equation}
Note the absence of the term $\,\textbf{E}_u\,$. Indeed,  the variables $z$ and $\bar{z}$ play uneven role in our considerations.
For a radial mapping we have $\, 2\,v = 2\,|f_{\bar{z}}(z)|\; =  \big(\frac{\rho}{|z|} \,-\,\dot{\rho}\big)\,$, so the equation (\ref{leEquation}) takes the form:
\begin{equation}\label{EulLagr}
\big(\textbf{E}_{uu} \;- \,2\;\textbf{E}_{uv} \;+ \textbf{E}_{vv}\big)\;|z|\,\ddot{\rho}\;\;=\;\; \,2\,\big(\textbf{E}_{uu} \;-\; \textbf{E}_{vv} \;+\;2\,v^{-1}\textbf {E}_v \big)\,v
\end{equation}\label{ELsystem}
We shall now take a quick look at the Euler-Lagrange equation for the Burkholder energy $ \,\mathscr B_p [f] \;=\;\int_{\Omega} \big[\,|f_{z}|\;-\;(p-1) |f_{\bar{z}}| \;\big]\,\cdot\, \big[\;|f_z|\;+\;|f_{\bar{z}}|\;\big]^{p-1}\, \textrm d z \,$. Direct computation shows that the integrand
$$
 \textbf{E} = \textbf{E}(u,\,v) = [u \,-(p-1)\,v ]\,\cdot\, [u \,+\,v ] ^{p-1}\;
$$
satisfies the following system of partial differential equations
\begin{eqnarray}\label{PDEs}
  \left\{\begin{array}{l}
 \textbf{E}_{uu} \;-\,2\;\textbf{E}_{uv} \;+ \textbf{E}_{vv}\;=\; 0 \\  \\ \textbf{E}_{uu} \;-\; \textbf{E}_{vv} \; =\;- \;2\,v^{-1}\,\textbf {E}_v
\end{array}\right.
\end{eqnarray}
\begin{corollary}
The radial stretching $\,f\,$ (as specified above) is a critical point of the Burkholder energy functional
$ \;\mathscr B_p [f]\;$.
\end{corollary}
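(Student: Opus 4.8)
The plan is to recognise the Corollary as the assertion that the specialised Euler--Lagrange equation derived above, namely \eqref{EulLagr}, is satisfied by the radial stretching $f$, and to show that for the Burkholder integrand this happens for the most economical of reasons: \emph{both} sides of \eqref{EulLagr} vanish identically. Recall that \eqref{EulLagr} reads
\[
\big(\textbf{E}_{uu}-2\textbf{E}_{uv}+\textbf{E}_{vv}\big)\,|z|\,\ddot\rho \;=\; 2\big(\textbf{E}_{uu}-\textbf{E}_{vv}+2v^{-1}\textbf{E}_v\big)\,v,
\]
with $\textbf{E}(u,v)=[u-(p-1)v]\,[u+v]^{p-1}$ and $v=|f_{\bar z}|$. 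Thus it suffices to verify the two pointwise identities recorded in \eqref{PDEs}: the first, $\textbf{E}_{uu}-2\textbf{E}_{uv}+\textbf{E}_{vv}=0$, kills the coefficient of $\ddot\rho$ on the left, and the second, $\textbf{E}_{uu}-\textbf{E}_{vv}=-2v^{-1}\textbf{E}_v$, kills the bracket on the right, so that \eqref{EulLagr} collapses to the trivial identity $0=0$.

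To establish \eqref{PDEs} I would first simplify the integrand via $w=u+v$, which rewrites $\textbf{E}=w^{p}-p\,v\,w^{p-1}$ and makes the differentiation transparent. Two rounds of differentiation give
\[
\textbf{E}_v=-p(p-1)\,v\,w^{p-2},\qquad \textbf{E}_{uu}=p(p-1)w^{p-3}\big(w-(p-2)v\big),
\]
\[
\textbf{E}_{uv}=-p(p-1)(p-2)\,v\,w^{p-3},\qquad \textbf{E}_{vv}=-p(p-1)w^{p-3}\big(w+(p-2)v\big).
\]
Adding, $\textbf{E}_{uu}-2\textbf{E}_{uv}+\textbf{E}_{vv}=p(p-1)w^{p-3}\big[(w-(p-2)v)+2(p-2)v-(w+(p-2)v)\big]=0$, which is the first line of \eqref{PDEs}; and $\textbf{E}_{uu}-\textbf{E}_{vv}=2p(p-1)w^{p-2}=-2v^{-1}\textbf{E}_v$, which is the second. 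Plugging these back into \eqref{EulLagr}, the left-hand side becomes $0\cdot|z|\ddot\rho=0$, and the right-hand side becomes $2\big(2p(p-1)w^{p-2}-2p(p-1)w^{p-2}\big)v=0$, so the equation holds at every $z$ with $f_z(z),f_{\bar z}(z)\neq 0$; hence $f$ is a critical point of $\mathscr B_p$.

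I do not anticipate any genuine obstacle here: the whole argument is the two differentiations above together with straightforward bookkeeping. The only subtlety worth a sentence is regularity — passing from the variational identity to \eqref{EulLagr} used $f\in\mathscr C^2$, hence $\ddot\rho$ continuous — but this assumption plays no active role in the present computation, since the coefficient of $\ddot\rho$ turns out to vanish identically, so the conclusion is entirely insensitive to it.
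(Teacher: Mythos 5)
Your proposal is correct and follows the paper's own route exactly: the paper reduces the claim to the system (\ref{PDEs}) and dismisses its verification with ``direct computation shows,'' after which (\ref{EulLagr}) holds because both sides vanish identically. Your substitution $w=u+v$ and the resulting second derivatives are all accurate, so you have simply supplied the computation the paper omits.
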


It is purely theoretical but still interesting to know which  variational integrals admit such radial mappings among their stationary solutions.
We shall see that only Burkholder integrals fulfil this requirement. To this effect we observe that the equation (\ref{EulLagr}), being satisfied for varied radial mappings, yields the system of PDEs in (\ref{PDEs}).
Indeed, let us view the terms in (\ref{EulLagr}) as  functions in three variables $\,|z|,\, \rho\,$ and $\, \dot{\rho}\,$, plus  linear dependence on   $\,\ddot{\rho}\,$. When the radial maps run over the admissible class, the term $\,\ddot{\rho}\,$ varies point-wise independently of the remaining three variables $\,|z|,\, \rho\,$ and $\, \dot{\rho}\,$ . This is possible only when both equations in (\ref{PDEs}) are satisfied.\\
 Now we are left with the task of solving the system (\ref{PDEs}). Here the second equation is reminiscent of the planar wave equation, suggesting to change variables in the following fashion:
$$
\xi = u \,+\,v\,, \;\;\;\; \zeta  = u\,-\,v  \;, \;\;\textrm{so}\;;\;\;2\,u = \xi +\zeta \;\;\textrm{and} \;\;\;2\,v = \xi - \,\zeta
$$
Now we express the integrand $\,\textbf{E}\,$ in the form $\,\textbf{E} (u,v) =  \Phi(\xi, \zeta)\,$.
The system (\ref{PDEs}) translates into the following equations for $\,\Phi\,$

\begin{eqnarray}\label{PDE}
  \left\{\begin{array}{l}
 \Phi_{\zeta\,\zeta}=\; 0 \\  \\ (\xi - \zeta) \Phi_{\xi\,\zeta} \;=\; \Phi_\zeta \;-\;\Phi_\xi
\end{array}\right.
\end{eqnarray}
Thus $\,\Phi\,$ is affine in the $\,\zeta$-variable; precisely, $\, \Phi(\xi,\,\zeta) \,=\, A(\xi)\,\zeta \;+\; B(\xi)\,$. Then the second equation yields the following ODE for the coefficients $\,A(\xi)\,$ and $\, B(\xi)\,$:
\begin{equation}\label{ODE}
 \dot{B}(\xi)\;=\;\; A(\xi)\;-\;\xi\, \dot{A}(\xi)
\end{equation}

Finally, suppose (like in the Burkholder's functional) that $\,\Phi\,$ is homogeneous of degree $\,p \, $. Thus, up to a constant factor, $\, A(\xi)\,=\, p \, \xi^{p-1}\,$. Then Equation (\ref{ODE}) yields $\,B(\xi)\,= \,(2-p)\, \xi^{p}\,$. Hence $\, \Phi(\xi,\,\zeta)\,=\, [\,p\; \zeta\; +\; (2-p)\,\xi\,] \;\xi^{p-1}\,$. Having in mind that $\,\xi = u + v\,$ and $\, \zeta  = u - v\,$, we return to $\,u , v\,$-variables. It results in the Burkholder function $\,\textbf{E}(u,\,v)\,= [u - (p-1)\, v]\cdot [u +\, v]^{p-1} \,$.

\begin{corollary}
The only isotropic $\,p\,$-homogeneous variational integrals which hold all radial mappings (of type specified above)  among their stationary solutions are the scalar multiples of $\,\mathscr B_p\,[f] \,$.
\end{corollary}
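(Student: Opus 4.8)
The plan is to run the computation preceding the statement in reverse. From the hypothesis that \emph{every} admissible radial stretching is a stationary solution I would distil a pointwise system of partial differential equations for the integrand $\textbf{E}$, and then integrate that system, using $p$-homogeneity to cut the solution set down to a single free constant.

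First I would fix an interior point $z_0 \neq 0$, write $r = |z_0|$, and use that for radial maps of the type under consideration one has $\alpha \equiv 1$ and $\beta = -z/\bar z$, so that the Euler--Lagrange equation (\ref{lagrangeEuler}) reduces to (\ref{EulLagr}). The key structural point is that (\ref{EulLagr}) is \emph{affine} in $\ddot\rho(r)$: its $\ddot\rho$-coefficient is $\big(\textbf{E}_{uu} - 2\textbf{E}_{uv} + \textbf{E}_{vv}\big)\,|z_0|$ and its $\ddot\rho$-free part is $2\big(\textbf{E}_{uu} - \textbf{E}_{vv} + 2v^{-1}\textbf{E}_v\big)v$, and both depend only on $(u,v) = \big(\tfrac12(\rho/r+\dot\rho),\,\tfrac12(\rho/r-\dot\rho)\big)$. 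I would then argue that the $2$-jet $\big(\rho(r),\dot\rho(r),\ddot\rho(r)\big)$ may be prescribed arbitrarily, subject only to the open constraints $\rho(r)>0$ and $|r\,\dot\rho(r)|<\rho(r)$ (which are exactly what guarantees $(|f_z|,|f_{\bar z}|)\in\mathbb R_+\times\mathbb R_+$ along with $\alpha\equiv1$, $\beta=-z/\bar z$): one realises a given admissible $2$-jet at $r$ by a local quadratic profile and extends it. Holding $(u,v)$ fixed while varying $\ddot\rho(r)$ then forces the $\ddot\rho$-coefficient and the $\ddot\rho$-free term to vanish \emph{separately}, and since $(u,v)$ sweeps out all of $\mathbb R_+\times\mathbb R_+$ as $\big(\rho(r),\dot\rho(r)\big)$ ranges over the admissible set, this yields the whole system (\ref{PDEs}) on $\mathbb R_+\times\mathbb R_+$.

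Next I would integrate (\ref{PDEs}) exactly along the lines sketched in the text: pass to the characteristic variables $\xi=u+v$, $\zeta=u-v$, write $\textbf{E}(u,v)=\Phi(\xi,\zeta)$, and obtain the system (\ref{PDE}), i.e. $\Phi_{\zeta\zeta}=0$ together with $(\xi-\zeta)\Phi_{\xi\zeta}=\Phi_\zeta-\Phi_\xi$. The first equation makes $\Phi$ affine in $\zeta$, $\Phi(\xi,\zeta)=A(\xi)\zeta+B(\xi)$, and substitution into the second gives the ODE (\ref{ODE}), $\dot B(\xi)=A(\xi)-\xi\,\dot A(\xi)$. Imposing $p$-homogeneity of $\textbf{E}$, equivalently $\Phi(t\xi,t\zeta)=t^p\Phi(\xi,\zeta)$, forces $A(\xi)=c\,p\,\xi^{p-1}$ for some constant $c\in\mathbb R$, whence (\ref{ODE}) integrates to $B(\xi)=c\,(2-p)\,\xi^p$; reverting to $u,v$ gives $\textbf{E}(u,v)=c\,[u-(p-1)v]\,[u+v]^{p-1}$, that is, $\mathscr E=c\,\mathscr B_p$.

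The step I expect to be the main obstacle is precisely the freedom exploited in the second paragraph: that $\ddot\rho$ at a single radius can be chosen independently of $\rho$ and $\dot\rho$ there, inside the $\mathscr C^2$ class on which the radial map is a bona fide competitor --- in particular, that a prescribed admissible $2$-jet at $r_0>0$ extends to a global profile respecting the sign constraints (and the decay at the origin when $p>2$). Since these constraints are open near $r_0$ and the origin behaviour decouples from what happens at $r_0$, I do not anticipate a genuine difficulty, but this is the point that must be argued with care. I would also record explicitly that the constant $c$ may have either sign, consistent with the fact that $\mathbf B_p$ itself assumes values of both signs.
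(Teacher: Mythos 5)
Your argument is essentially the paper's own: the paper likewise derives the system (\ref{PDEs}) by observing that (\ref{EulLagr}) is affine in $\ddot\rho$ with $\ddot\rho$ varying independently of $|z|,\rho,\dot\rho$, then solves via the characteristic variables $\xi=u+v$, $\zeta=u-v$ and imposes $p$-homogeneity to land on a scalar multiple of the Burkholder integrand. Your extra care about realising an arbitrary admissible $2$-jet at a fixed radius, and your remark that the constant may have either sign, merely make explicit details the paper leaves implicit; the proof is correct.
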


\section{Quasiconcavity at zero  versus quasiconcavity at $\,A \in\mathbb R^{2\times 2}\,$}
Let us begin with an example:
 \begin{example} \label{MaximumEnergyExample}
The following function belongs to the Sobolev space $\,\mathscr W^{1,p}(\mathbb C)\,$ for every $\,1 < p < \infty\,$ and its $\,\mathbf B_p\,$-energy equals zero.
\begin{displaymath}
 f(z)\; = \; \left\{\begin{array}{ll}
 z\;\; & \textrm{if $\,|z| \leqslant R\,$}  \\
    \frac{R^2}{\bar{z}} & \textrm{if $\,|z| \geqslant R\,$}
\end{array} \right.
\end{displaymath}
\end{example}Indeed, we have
\begin{eqnarray}
\mathscr B_p\,[f] &= & \int_\mathbb C  \mathbf B_p ( f_z , f_{\bar{z}} ) \, \textrm d z\;=\; \int_{|z| \leqslant R} \textrm d z \; - \; (p-1) R^{2p} \int_{|z| \geqslant R } \frac{\textrm d z }{|z|^{2p}}\nonumber\\ &=& \pi R^2 \;-\; (p-1) R^{2p} \,\frac{\pi R^{2-2p}}{p-1}\; = 0\,.\nonumber
\end{eqnarray}
In view of Conjecture  \ref{Quasiconvexity-at-0} one may expect $\,f\,$ to have maximum energy (equal to zero) within the class $\,\mathscr W^{1,p}(\mathbb C)\,$. This example gains additional interest if we can answer in the affirmative the following
\begin{question} \label{BigQuestion} Given a linear map $\, z \mapsto a z + b {\bar{z}}\,$, does there exist a function $\, f \in \mathscr W^{1,p} (\mathbb C)\,$  such that
\begin{displaymath}
  \left\{\begin{array}{ll}
 f(z) = a z + b {\bar{z}} \;,\;  \textrm{in some nonempty domain  $\,\Omega \subset \mathbb C\,$}  \nonumber\\
 $\,$ \\
 \mathscr B_p\,[f] \;=\; \int_\mathbb C  \mathbf B_p ( f_z , f_{\bar{z}} ) \, \textnormal{d}z\;= 0
\end{array}\nonumber \right.
\end{displaymath}
\end{question}
In other words:
\begin{equation}\label{extremalsAtZero}
\int_{\mathbb C \setminus \Omega} \mathbf B_p ( f_z , f_{\bar{z}} ) \, \textnormal{d}z\;= \; - \mathbf B_p(a, b) \, |\Omega|
\end{equation}
Now quasiconcavity of $\,\mathbf B_p\,$ at zero would tell us that $\,- \mathbf B_p(a, b) \, |\Omega|\,$ is the maximum energy among Sobolev mappings in $\,\mathscr W^{1,p}(\mathbb C \setminus \Omega)\,$ which agree with $\, a z + \, b \bar{z}\,$ on $\,\partial \Omega\,$.
Question \ref{BigQuestion}\, has yet another interesting effect.
\begin{proposition}
Accept that Burkholder function is quasiconcave at zero and that a linear map $\,A z =a z + \, b \bar{z}\,$ in a domain $\,\Omega\,$ has been found to admit an extension satisfying (\ref{extremalsAtZero}).
Then $\,\mathbf B_p\,$ is quasiconcave at $\,A\,$.
\end{proposition}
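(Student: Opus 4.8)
The plan is to turn the zero-energy extension produced by Question~\ref{BigQuestion} into an admissible competitor for the quasiconcavity inequality at $A$, thereby reducing the inequality at $A$ to the one at $0$. The first step I would carry out is an \emph{upgrade of the hypothesis}: quasiconcavity of $\mathbf B_p$ at the zero matrix means, by definition, that $\int_{\mathbb C}\mathbf B_p(D\eta)\,\textnormal{d} z\le 0$ for all $\eta\in\mathscr C^\infty_\circ(\mathbb C)$. Since $\mathbf B_p$ depends only on the gradient and the functional $h\mapsto\mathscr B_p[h]:=\int_{\mathbb C}\mathbf B_p(h_z,h_{\bar z})\,\textnormal{d} z$ is continuous with respect to $L^p$-convergence of gradients (from the elementary bound $\abs{\mathbf B_p(X)-\mathbf B_p(Y)}\le C\bigl(\abs{X}^{p-1}+\abs{Y}^{p-1}\bigr)\abs{X-Y}$ together with H\"older's inequality), a mollify-and-cut-off argument --- subtracting the mean on each dyadic annulus so as not to lose $L^p$-control of the gradient --- shows that in fact $\mathscr B_p[h]\le 0$ for \emph{every} $h$ with $Dh\in L^p(\mathbb C)$. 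It is this enlarged form that we shall need, since the competitor constructed below is not compactly supported.

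Next I would reduce to test fields supported inside $\Omega$. Fix $\eta\in\mathscr C^\infty_\circ(\mathbb C)$ and a point $z_0\in\Omega$, and set $\eta_\lambda(z):=\lambda\,\eta((z-z_0)/\lambda)$. Then $D\eta_\lambda(z)=D\eta((z-z_0)/\lambda)$ assumes exactly the same matrix values as $D\eta$, so the change of variables $w=(z-z_0)/\lambda$ gives
\[
\int_{\mathbb C}\bigl[\mathbf B_p(A+D\eta_\lambda)-\mathbf B_p(A)\bigr]\,\textnormal{d} z=\lambda^{2}\int_{\mathbb C}\bigl[\mathbf B_p(A+D\eta)-\mathbf B_p(A)\bigr]\,\textnormal{d} z ,
\]
while $\operatorname{supp}\eta_\lambda=z_0+\lambda\operatorname{supp}\eta\subset\Omega$ once $\lambda$ is small. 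Hence it is enough to prove $\int_{\mathbb C}[\mathbf B_p(A+D\eta)-\mathbf B_p(A)]\,\textnormal{d} z\le 0$ under the extra assumption $U:=\operatorname{supp}\eta\subset\Omega$.

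Then I would splice. Let $f$ be the extension furnished by Question~\ref{BigQuestion}, so $Df\equiv A$ on $\Omega$, $Df\in L^p(\mathbb C)$, and \eqref{extremalsAtZero} holds. Put $g:=f+\eta$; then $Dg\in L^p(\mathbb C)$, with $Dg=A+D\eta$ on $U$ and $Dg=Df$ on $\mathbb C\setminus U$. Applying the enlarged inequality to $g$ and splitting $\mathbb C\setminus U=(\Omega\setminus U)\cup(\mathbb C\setminus\Omega)$,
\[
\int_{U}\mathbf B_p(A+D\eta)\,\textnormal{d} z+\int_{\Omega\setminus U}\mathbf B_p(A)\,\textnormal{d} z+\int_{\mathbb C\setminus\Omega}\mathbf B_p(Df)\,\textnormal{d} z\le 0 .
\]
The middle term is $\mathbf B_p(A)(\abs{\Omega}-\abs{U})$ and, by \eqref{extremalsAtZero} (recall $A\simeq(a,b)$), the last term is $-\mathbf B_p(A)\abs{\Omega}$; substituting, and using that $\mathbf B_p(A+D\eta)-\mathbf B_p(A)$ vanishes outside $U$, one obtains $\int_{\mathbb C}[\mathbf B_p(A+D\eta)-\mathbf B_p(A)]\,\textnormal{d} z\le 0$, which is quasiconcavity of $\mathbf B_p$ at $A$.

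The only step that is more than bookkeeping is the density/continuity upgrade of the hypothesis: one must be certain that quasiconcavity at $0$, stated only for smooth compactly supported variations, genuinely descends to $\mathscr B_p[g]\le 0$ for a map $g$ that coincides with $f$ --- not with a linear map --- near infinity, and the additive-constant correction on dyadic annuli is exactly what makes this legitimate. I expect this to be the main (though modest) obstacle; everything else is arithmetic with the constant $\mathbf B_p(A)$ on the set $\Omega\setminus U$. It is worth stressing that the genuine difficulty of the proposition lies in its hypothesis: exhibiting the extension $f$ of Question~\ref{BigQuestion}, with $Df\in L^p(\mathbb C)$ and vanishing $\mathbf B_p$-energy, is the hard and still open part.
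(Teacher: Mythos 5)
Your argument is correct and is essentially the paper's own proof: you splice the test variation into the zero-energy extension $f$, apply quasiconcavity at zero to the resulting map, and use \eqref{extremalsAtZero} to convert $\int_{\mathbb C\setminus\Omega}\mathbf B_p(Df)$ into $-\mathbf B_p(a,b)\,|\Omega|$, which is exactly the computation in the paper. The two preliminary steps you spell out --- the scaling reduction to test fields supported in $\Omega$ and the density upgrade of quasiconcavity at $0$ from $\mathscr C^\infty_\circ$ variations to maps in $\mathscr W^{1,p}(\mathbb C)$ --- are left implicit in the paper but are precisely what legitimizes its one-line chain of equalities.
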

\begin{proof} Let $\,\varphi \in \mathscr C_\circ^\infty(\Omega)\,$ be any test mapping. We need to show that the following integral is nonpositive,
\begin{eqnarray}
&&\int_\mathbb C \big[\mathbf B_p(A + D\varphi) - \mathbf B_p(A) \big] \; = \int_\Omega \big[\mathbf B_p(A + D\varphi) - \mathbf B_p(A) \big] \nonumber\\\;& = & \int_\Omega \mathbf B_p(A + D\varphi) + \int_{\mathbb C \setminus \Omega}  \mathbf B_p(Df)   =  \int_{\mathbb C}  \mathbf B_p(DF) \;\leqslant 0\,.\nonumber
\end{eqnarray}
where
\begin{displaymath}
  F(z) = \left\{\begin{array}{ll}
 a z + b {\bar{z}} \, + \varphi(z) \;\;  \textrm{in the domain  $\,\Omega \subset \mathbb C\,$}  \nonumber\\
 $\,$ \\
 f(z)  \,\; \textnormal{in}\;\mathbb C \setminus \Omega
\end{array}\nonumber \right.
\end{displaymath}
The latter inequality follows since $\, F \in \mathscr W^{1,p}(\mathbb C)\,$ and $\,\mathbf B_p\,$ was assumed to be quasiconcave at zero.
\end{proof}
Now it follows from Example \ref{MaximumEnergyExample} that
\begin{corollary} Quasiconcavity of $\,\mathbf B_p\,$ at zero would imply quasiconcavity at the identity matrix.
\end{corollary}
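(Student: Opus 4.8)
The plan is to read this Corollary off from the Proposition immediately above, specialised to the identity matrix, using as its input the explicit map supplied by Example \ref{MaximumEnergyExample}. First I would write the identity matrix as the linear map $z\mapsto az+b\bar z$ with $a=1$ and $b=0$. The Proposition asserts that if $\mathbf B_p$ is quasiconcave at the zero matrix and the linear map $Az=az+b\bar z$ admits, on some nonempty domain $\Omega$, an extension $f\in\mathscr W^{1,p}(\mathbb C)$ with $f\equiv Az$ on $\Omega$ and $\mathscr B_p[f]=0$ --- equivalently, with the balancing identity (\ref{extremalsAtZero}) --- then $\mathbf B_p$ is quasiconcave at $A$. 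So the whole task reduces to producing such an extension for $A=I$.

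The second step is to observe that Example \ref{MaximumEnergyExample} is precisely this input. Take $\Omega=\mathbb D_R$ and
\[
 f(z)=\begin{cases} z,& |z|\le R,\\[2pt] R^2/\bar z,& |z|\ge R.\end{cases}
\]
Since $z\bar z=R^2$ on the circle $|z|=R$, the two branches agree there, so $f\in\mathscr W^{1,p}(\mathbb C)$ (as recorded in Example \ref{MaximumEnergyExample}), and $f(z)\equiv z$ on $\mathbb D_R$, i.e.\ $f$ restricts there to the linear map $A=I$. The energy computation in Example \ref{MaximumEnergyExample} gives $\mathscr B_p[f]=0$. Since $\mathbf B_p(1,0)=1$, one has $\int_{\mathbb D_R}\mathbf B_p(f_z,f_{\bar z})\,\textnormal{d}z=|\mathbb D_R|$, so the vanishing of the total energy reads exactly
\[
 \int_{\mathbb C\setminus\mathbb D_R}\mathbf B_p(f_z,f_{\bar z})\,\textnormal{d}z=-\,\mathbf B_p(1,0)\,|\mathbb D_R|,
\]
which is identity (\ref{extremalsAtZero}) with $(a,b)=(1,0)$.

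Finally, with the hypotheses of the Proposition verified for $A=I$, the Proposition delivers the quasiconcavity of $\mathbf B_p$ at the identity matrix under the standing assumption that it is quasiconcave at $0$ --- which is the assertion. I do not expect a genuine obstacle: the only points that require a line of checking are the matching of the two formulas for $f$ along $\partial\mathbb D_R$ (hence $f\in\mathscr W^{1,p}$) and the exact cancellation of the interior mass $|\mathbb D_R|$ against the exterior mass $-(p-1)R^{2p}\int_{|z|\ge R}|z|^{-2p}\,\textnormal{d}z=-|\mathbb D_R|$, and both are already contained in Example \ref{MaximumEnergyExample}. In effect the Corollary is nothing but the Proposition applied in the concrete situation isolated by that Example.
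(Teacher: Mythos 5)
Your proposal is correct and coincides with the paper's own argument: the corollary is presented there as an immediate consequence of the preceding Proposition applied to the map of Example \ref{MaximumEnergyExample} with $a=1$, $b=0$, $\Omega=\mathbb D_R$. You have simply written out the verification (the matching along $|z|=R$, $\mathbf B_p(1,0)=1$, and the cancellation giving $\mathscr B_p[f]=0$, i.e.\ identity (\ref{extremalsAtZero})) that the paper leaves implicit.
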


\bigskip

We believe that the presented advances (including some of the conditional statements for the Burkholder functions) will convince the interested readers  of the intricate nature of computing the $\,p\,$-norms of the Beurling Transform.

\bibliographystyle{amsplain}

\end{document}